\newenvironment{smallarray}[1]
{\null\,\vcenter\bgroup\scriptsize
	\arraycolsep=.13885em
	\hbox\bgroup$\array{@{}#1@{}}}
{\endarray$\egroup\egroup\,\null}
\newcommand{\seqnum}[1]{\href{https://oeis.org/#1}{\rm \underline{#1}}}
\newcommand\Enn{\mathbb{N}}
\newcommand\Zee{\mathbb{Z}}
\newcommand\Que{\mathbb{Q}}
\newcommand\totk{{\rm Tot}^\kappa}
\newcommand\totn{{\rm Tot}^\nu}
\title{Proving Properties of
	$\varphi$-Representations with the {\tt Walnut} Theorem-Prover
}
\author{Jeffrey Shallit
}
\abstract{We revisit a classic theorem of Frougny and Sakarovitch concerning automata for
	$\varphi$-representations, and show how to obtain it in a different
	and more computationally direct way.  Using it, we can
	find simple, induction-free proofs
	of existing results in the literature about these
	representations, in a uniform and straightforward
	manner.  In particular, we can easily and ``automatically''
	recover many of the results of recent papers of Dekking and Van Loon.
	We also obtain a number of new results on $\varphi$-representations.
}
\keywords{$\beta$-expansion, Zeckendorf representation, finite automaton,
	decision procedure.}
\begin{document}
	
	
	\vskip .1in
	\centerline{\hfill\it In honor of the 75th birthday of Christiane Frougny}
	
	\section{Introduction}
	\label{one}
	
	In 1957, George Bergman \cite{Bergman:1957} observed that one can
	expand a non-negative
	real number $z$ in base $\varphi = (1+\sqrt{5})/2$, in the
	sense that there exist ``digits'' $a_i \in \{0,1\}$ such that
	\begin{equation}
		z = \sum_{-\infty \leq i < r} a_i \varphi^i .
		\label{fund}
	\end{equation}
	In analogy with base-$b$ representation, we can write
	such a base-$\varphi$ representation as a string of digits
	$a_{r-1} \cdots a_0 \, . \, a_1 a_2 a_3 \cdots$ 
	where the non-negative and negative powers
	are separated by the analogue of a ``decimal point''.  
	We call $a_{r-1} \cdots a_0$ the {\it left part} and
	$ . \, a_1 a_2 a_3 \cdots$ the {\it right part} of
	the $\varphi$-representation.
	The left part is always finite, while the right part
	may be finite or infinite.
	These parts are analogous to the more familiar
	integer and fractional parts of decimal representations of the
	positive reals.
	
	If Equation~\eqref{fund} holds, then we write
	$$ z = [a_{r-1} \cdots a_0 \, . \, a_1 a_2 a_3 \cdots]_\varphi.$$
	For example,
	$ 2 = [10.01]_\varphi = [1.11]_\varphi $
	and
	$ \frac{1}{2} = [.010010010010 \cdots]_\varphi$.
	
	Furthermore, such a representation $x.y$ as a string of binary digits with
	a decimal point is essentially unique
	if one imposes the very natural
	restriction that $a_i a_{i+1} \not= 1$ for all $i$
	(or, equivalently, if the string $xy$ contains no occurrence of the
	block $11$), and
	furthermore that if the expansion is infinite, it cannot end in
	$010101\cdots$.  
	Throughout the paper,
	we adopt the convention that in expansions of the form $[x.y]_\varphi$,
	we disregard leading zeros in $x$ and trailing zeros in $y$. Thus
	$10.01$ and $010.0100$ are regarded as ``essentially'' the same.
	This convention is extremely useful, as sometimes our automata will
	need to pad multiple inputs with zeros to ensure they all
	have the same length.

	We call a $\varphi$-representation obeying these rules
	{\it canonical\/}, and write it as $(x)_\varphi$.
	Bergman \cite{Bergman:1957} proved
	that the canonical $\varphi$-representation of a natural number
	is finite, and Table~\ref{tab1} gives the 
	canonical expansions of the first few positive integers.
	
	\begin{table}[H]
		\begin{center}
			\begin{tabular}{c|cccccccccc}
				$n$ & 1 & 2 & 3 & 4 & 5 & 6 & 7 \\
				\hline
				$(n)_\varphi$ & 1. & 10.01 & 100.01 & 101.01 & 1000.1001 & 1010.0001
				& 10000.0001
			\end{tabular}
		\end{center}
		\caption{First few canonical expansions.}
		\label{tab1}
	\end{table}
	
	The left and right parts of the canonical $\varphi$-representation of $n$
	can be found in sequences \seqnum{A105424} and \seqnum{A341722}
	in the {\it On-Line Encyclopedia of Integer Sequences} (OEIS)
	\cite{Sloane:2023}.
	
	Bergman's ideas were generalized to positive real numbers
	$\beta$ by R\'enyi \cite{Renyi:1957}, who
	called such expansions {\it $\beta$-expansions}.  
	Later, fundamental work characterizing such
	expansions was done by Parry \cite{Parry:1960}.
	
	Several researchers, including Bertrand-Mathis
	and Frougny, have found deep and interesting connections
	between these expansions and finite automata when $\beta$ is an
	algebraic number.  See, for example, 
	\cite{Bertrand-Mathis:1989,Frougny:1989,Frougny:1991a,Frougny:1992a,Frougny:1992b,Frougny:1992c,Frougny:1992d,Berend&Frougny:1994,Frougny:2003}.
	
	In a very interesting article,
	Frougny and Sakarovitch \cite{Frougny&Sakarovitch:1999} proved that
	the canonical $\varphi$-representation of a non-negative integer can be
	computed by a finite automaton $A$, in the following sense:
	$A$ takes three inputs {\it in parallel:}
	$n$ represented in the Zeckendorf numeration system (see below),
	and binary strings $x$ and $y$,
	and accepts if and only if $(n)_\varphi = x.y^R$.  Here, $y^R$ denotes
	the reversal of the string $y$. This representation is called
	``folded''; it is
	useful in part because the coefficients corresponding to small (in
	absolute value) powers of $\varphi$ are grouped together when
	read by an automaton.  For example, the entry of Table~\ref{tab1}
	corresponding to $n= 6$ would correspond to the accepted input
	$ [1,1,1][0,0,0][0,1,0][1,0,0]$:   the first entries of each
	triple spell out $1001$, which is $6$ in Zeckendorf representation;
	the second entries spell out $1010$ and the third entries
	spell out $1000$.
	However, their article is rather challenging to read and
	they did not explicitly present this automaton.
	
	The first goal of this paper is to explain how the Frougny-Sakarovitch
	automaton can be, rather easily,
	computed explicitly using existing software; namely,
	the free software {\tt Walnut} initially developed by Hamoon Mousavi
	\cite{Mousavi:2021}.  This software can prove or disprove theorems
	about automatic sequences and their generalizations, using a
	decision procedure for a certain extension of Presburger
	arithmetic \cite{Shallit:2023}.  It suffices to express the desired
	assertions in first-order logic.   Furthermore, if the particular
	logical statement has free variables, {\tt Walnut} will compute a
	finite automaton accepting precisely those values of the free variables making
	the statement true.   Finally, if a formula $F$ has two or more free
	variables, say $n$ and $x_1,x_2,\ldots x_t$, {\tt Walnut}
	can compute the number of $t$-tuples $(x_1,\ldots, x_t)$ such that
	$F(n,x_1,\ldots, x_t)$ evaluates to {\tt TRUE}.
	
	Once we have this automaton, we can use it to easily reprove
	existing results from the literature in a straightforward and uniform
	manner.   For example, we reprove some results
	recently considered by Dekking and Van Loon \cite{Dekking&van.Loon:2023}.
	They called a finite $\varphi$-representation (not necessarily canonical)
	of an integer $n$ a
	{\it Knott expansion} if it does not end in $011$, and they developed
	a rather complicated method for computing
	the number of different Knott expansions of $n$.  We will see that 
	the Frougny-Sakarovitch automaton
	allows us to enumerate Knott expansions, and thereby recover
	the results of Dekking and Van Loon in a purely ``automatic'' fashion,
	without any tedious inductions.
	Furthermore, by simply changing the conditions we impose on the
	form of the expansion, we can easily and ``automatically'' enumerate
	other types of expansions.  The second goal of the paper is to
	illustrate these techniques and obtain a number of new results.
	
	The paper is organized as follows.   In Section~\ref{two} we construct
	the Frougny-Sakarovitch automaton.
	In Sections~\ref{five}--\ref{eight} we prove
	various old and new results about $\varphi$-representations; one
	new result is Theorem~\ref{gerdemann}, which  proves a 2012 conjecture of
	Dale Gerdemann.
	In Section~\ref{three}
	we discuss Knott expansions;
	in Section~\ref{four} we discuss
	a different type of expansion, called a ``natural expansion'', introduced
	by Dekking and Van Loon; and in Section~\ref{fiveb} we discuss
	another kind of expansion, called DVL-expansions.   Finally, in Section~\ref{ten} we discuss more general
	expansions for algebraic integers.
	
	\section{The Frougny-Sakarovitch automata}
	\label{two}
	
	We will need two different representations of 
	integers:  the Zeckendorf system and the negaFibonacci system.
	
	Let the Fibonacci numbers be defined by 
	the recurrence $F_n = F_{n-1} + F_{n-2}$ and initial conditions
	$F_0 = 0$ and $F_1 = 1$.   Note that this uniquely defines
	$F_n$ for {\it all\/} integers $n$ (even negative integers).
	
	In the well-known Zeckendorf system \cite{Lekkerkerker:1952,Zeckendorf:1972},
	we write a non-negative integer $n$ as a sum
	$n = \sum_{2 \leq i \leq t} a_i F_i$ with $a_i \in \{0,1\}$.
	We abbreviate this by the expression $[x]_F$, where
	$x = a_t a_{t-1} \cdots a_2$.
	This representation
	is unique if we impose the condition $a_i a_{i+1} \not= 1$;
	we let $(n)_F$ denote the canonical representation obeying this condition.
	Thus $(43)_F = 10010001$.
	
	It will also be useful to be able to represent negative integers.
	In the negaFibonacci system \cite{Bunder:1992,Shallit&Shan&Yang:2023},
	we write an integer $n$ (positive, negative, or zero) as a sum
	$n = \sum_{1 \leq i \leq t} a_i F_{-i}$, where again $a_i \in \{0,1\}$.
	We abbreviate this by the expression $[x]_{-F}$, where
	$x = a_t \cdots a_1$. Again, this representation
	is unique if we impose the condition $a_i a_{i+1} \not= 1$;
	we let $(n)_{-F}$ denote the canonical representation obeying this condition.
	Thus $(43)_{-F} = 101001010$.  
	
	(An interesting
	alternative to the negaFibonacci system has recently been proposed
	by Labb\'e and Lep\v{s}ov\'a; see \cite{Labbe&Lepsova:2023}.)
	
	We start by constructing the first Frougny-Sakarovitch automaton.
	It has three inputs over $\{0,1\}$ that are read in
	parallel:  $w, x, $ and $y$.   Here 
	\begin{itemize}
		\item $w$ is the (canonical) Zeckendorf representation of some integer $n$,
		\item $x$ is the left part of a $\varphi$-representation
		\item $y^R$ is the right part of a $\varphi$-representation
	\end{itemize}
	and the automaton accepts if and only if $n = [x.y]_\varphi$.
	Notice that for this automaton
	{\it we impose no other conditions\/} on $x$ and $y$; they are
	simply arbitrary binary strings.
	However, $w$ is assumed to contain no occurrence of $11$.
	All three inputs are allowed to have any number of leading zeros, which
	permits the inputs to be of the same length so they can be
	read simultaneously in parallel.
	
	The basic idea is very simple.   As Bergman \cite{Bergman:1957}
	noted (and easily proved by induction), we have
	$\varphi^k = F_k \varphi + F_{k-1}$ for all integers $k$.  It now follows that
	\begin{align}
		\sum_{0 \leq i < r} a_i \varphi^i &= \sum_{0 \leq i < r}
		a_i (F_i \varphi + F_{i-1}) \nonumber \\
		&= \left( \sum_{0 \leq i < r} a_i F_i \right) \varphi  +
		\left(\sum_{0 \leq i < r} a_i F_{i-1} \right) \nonumber \\
		&= \left( a_0 F_0 + a_1 F_1 + \sum_{2 \leq i < r} a_i F_i \right) \varphi
		+ a_0 F_{-1} + a_1 F_0 + a_2 F_1 +
		\sum_{2 \leq j < r-1} a_{j+1} F_j \nonumber \\
		&= (a_1 + [a_r \cdots a_2]_F) \varphi + [a_r \cdots a_3]_F + a_0 + a_2.
		\label{berg1}
	\end{align}
	Similarly,
	\begin{align}
		\sum_{1 \leq i \leq s} b_i \varphi^{-i} &= 
		\sum_{1 \leq i \leq s} b_i (F_{-i} \varphi + F_{-i-1}) \nonumber\\
		&= \left( \sum_{1 \leq i \leq s} b_i F_{-i} \right) \varphi + 
		\sum_{1 \leq i \leq s} b_i F_{-i-1} \nonumber \\
		&= [b_s \cdots b_1]_{-F} \, \varphi  + [b_s \cdots b_1 0]_{-F}  . \label{berg2}
	\end{align}
	Thus, the left part of a $\varphi$-representation is easily expressed
	as a linear combination $c\varphi + d$, in terms of shifted
	Zeckendorf representations, and the right part of a $\varphi$-representation
	is easily expressed as a linear combination $c'\varphi + d'$,
	in terms of shifted negaFibonacci representations.  Then
	$n = (c+c') \varphi + d + d'$ implies that $n = [a_{r-1} \cdots a_1 a_0.b_1 b_2 \cdots b_s]$ if and only if
	\begin{align}
		c' &= -c \label{cond1} \\
		n &= d+ d' \label{cond2}
	\end{align}
	both hold.
	
	These are conditions that
	can be expressed in first-order logic, and hence we can build an
	automaton to check them.
	In {\tt Walnut} we can
	build this automaton from smaller, easily-digestible pieces,
	as we now describe.  
	
	It may be helpful to have a short summary of {\tt Walnut} syntax.
	\begin{itemize}
		\item {\tt reg} defines a regular expression
		\item {\tt def} defines an automaton based on a logical formula
		\item {\tt eval} evaluates a formula with no free variables and
		returns {\tt TRUE} or {\tt FALSE}
		\item {\tt E} represents the existential quantifier $\exists$;
		{\tt A} represents the universal quantifier $\forall$
		\item {\tt \&} is logical {\tt AND}, {\tt |} is logical {\tt OR},
		{\tt =>} is logical implication; {\tt <=>} is logical {\tt IFF};
		{\tt \char'176} is logical {\tt NOT}
		\item {\tt ?msd\_fib} instructs {\tt Walnut} that numbers
		should be expressed in Zeckendorf representation;
		{\tt ?msd\_neg\_fib} does the same thing for negaFibonacci
		representation.
	\end{itemize}

	\subsection{Zeckendorf normalizer}
	
	We need a ``normalizer'' for Zeckendorf expansions; it takes
	$x$ and $y$ as inputs, with $x$ an arbitrary binary string and
	$y$ a canonical Zeckendorf expansion of some integer $n$,
	and accepts if and only if $n = [x]_F$.
	Such an automaton can be found, for example, in 
	\cite{Berstel:2001,Shallit:2021}.
	It has $5$ states (or $4$ if one omits the dead state).
	We call it {\tt fibnorm}.
	\begin{figure}[htb]
		\begin{center}
			\includegraphics[width=3.5in]{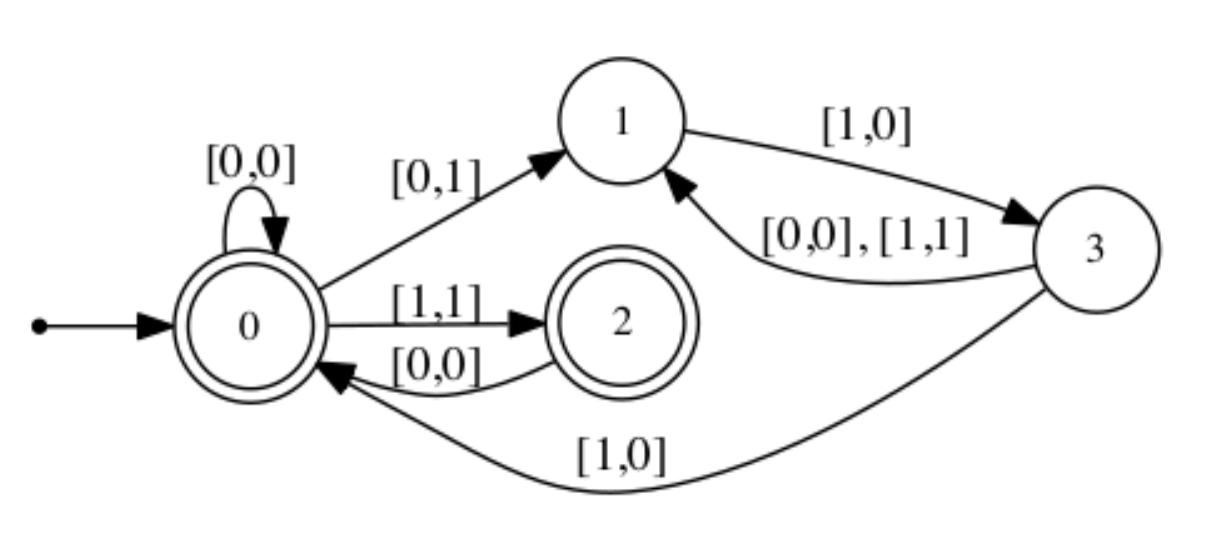}
		\end{center}
		\caption{The automaton {\tt fibnorm}.}
		\label{fibnormaut}
	\end{figure}
	
	\subsection{NegaFibonacci normalizer}
	We also need the analogous ``normalizer'' for negaFibonacci expansions; it takes
	$x$ and $y$ as inputs, with $x$ an arbitrary binary string and
	$y$ a canonical negaFibonacci expansion of some integer $n$,
	and accepts if and only if $n = [x]_{-F}$.
	Such an automaton has $5$ states (or $4$ if one omits the dead state).
	We call it {\tt negfibnorm}.  Correctness is left to the reader.
	\begin{figure}[H]
		\begin{center}
			\includegraphics[width=4in]{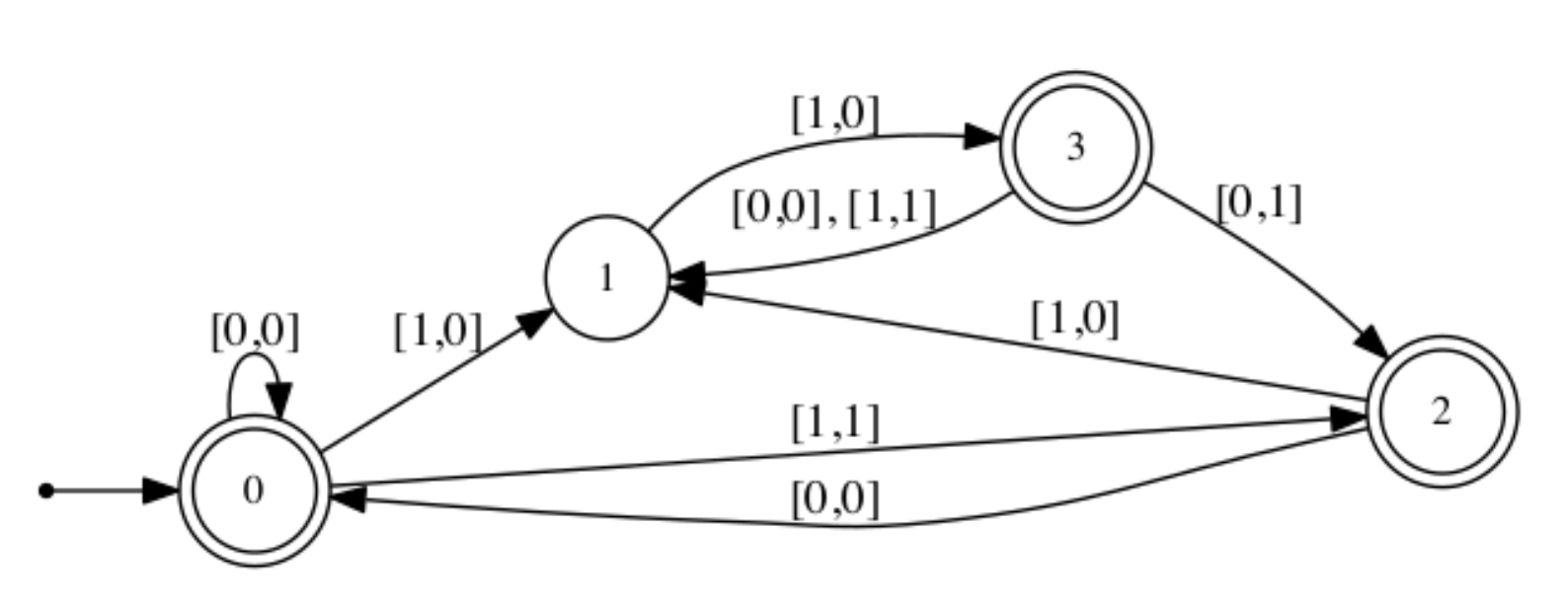}
		\end{center}
		\caption{The automaton {\tt negfibnorm}.}
		\label{negfibnormaut}
	\end{figure}

	\subsection{Shifters}
	As we have seen in Equations~\eqref{berg1} and \eqref{berg2}, we need
	to shift some representations left or right.  We do this with the
	following automata:
	\begin{itemize}
		\item {\tt shiftl} takes arguments $x$ and $y$, and accepts if $y$
		is $x$ shifted to the left (and $0$ inserted at the right).
		\item {\tt shiftr} takes arguments $x$ and $y$, and accepts if
		$y$ equals the string $x$ shifted to the right (and the least
		significant digit disappears).
	\end{itemize}
	We can easily build these with regular expressions as follows:
	\begin{verbatim}
		reg shiftl {0,1} {0,1} "([0,0]|[0,1][1,1]*[1,0])*":
		reg shiftr {0,1} {0,1} "([0,0]|[1,0][1,1]*[0,1])*(()|[1,0][1,1]*)":
	\end{verbatim}
	
	\subsection{Last bits}
	Again, as we have seen in Equations~\eqref{berg1} and \eqref{berg2}, we need
	the ability to extract the last, second-to-last, or third-to-last
	bits of a string.  These can easily be specified with regular
	expressions.  We create
	three:  {\tt lstbit1}, {\tt lstbit2}, and {\tt lstbit3}.
	Here {\tt lstbitn} 
	takes arguments $x$ and $y$, where $x \in \{0,1\}^*$ and $y$
	is the representation of either $0$ or $1$, and accepts
	if $y$ is the $n$'th-last bit of $x$ (or $0$ if $|x|<n$).
	\begin{verbatim}
		reg lstbit1 {0,1} msd_fib "()|(([0,0]|[1,0])*([0,0]|[1,1]))":
		reg lstbit2 {0,1} msd_fib "()|[0,0]|[1,0]|(([0,0]|[1,0])*
		(([0,0]([0,0]|[1,0]))|([1,0]([0,1]|[1,1]))))":
		reg lstbit3 {0,1} msd_fib "()|[0,0]|[1,0]|(([0,0]|[1,0])
		([0,0]|[1,0]))|(([0,0]|[1,0])*(([0,0]([0,0]|[1,0])
		([0,0]|[1,0]))|([1,0]([0,0]|[1,0])([0,1]|[1,1]))))":
	\end{verbatim}
	
	\subsection{Converting from negaFibonacci to Zeckendorf}
	Finally, we will need to be able to convert negaFibonacci representation
	to Zeckendorf representation.   We need two automata:
	\begin{itemize}
		\item {\tt fibnegfib} takes two arguments $x$ and $y$, where
		$x = (n)_F$ for some $n \geq 0$ and
		$y = (m)_{-F}$, and
		accepts if $n = m$.  It has $12$ states.
		
		\item {\tt fibnegfib2} takes two arguments $x$ and $y$, where
		$x = (n)_F$ for some $n \geq 0$ and
		$y = (m)_{-F}$, and
		accepts if $n = -m$.  It has $9$ states.
	\end{itemize}
	
	\subsection{Constructing the first Frougny-Sakarovitch automaton}
	
	We now have all the pieces we need to construct the first
	Frougny-Sakarovitch automaton directly from the formulas
	\eqref{berg1} and \eqref{berg2}.  We do this with the following
	{\tt Walnut} code:
	\begin{verbatim}
		def phipartleft "?msd_fib Er,s,y,b $shiftr(x,r) & $shiftr(r,s) &
		$fibnorm(s,y) & $lstbit2(x,b) & z=y+b":
		def intpartleft "?msd_fib Er,s,t,y,b,c $shiftr(x,r) & $shiftr(r,s) &
		$shiftr(s,t) & $fibnorm(t,y) & $lstbit1(x,b) & $lstbit3(x,c) &
		z=y+b+c":
		def phipartright "?msd_neg_fib $negfibnorm(x,z)":
		def intpartright "?msd_neg_fib Er $shiftl(x,r) & $negfibnorm(r,z)":
		
		def frougny1 "?msd_fib Et1,t2 $phipartleft(x,t1) &
		$phipartright(y,?msd_neg_fib t2) & $fibnegfib2(t1,t2)":
		# accepts if the phi part of inputs x and y sums to 0
		
		def frougny2 "?msd_fib Et1, t2, x2 $intpartleft(x,t1) 
		& $intpartright(y,?msd_neg_fib t2) & (((?msd_neg_fib t2<0) &
		$fibnegfib2(x2,?msd_neg_fib t2) & ?msd_fib t1=n+x2)|
		(((?msd_neg_fib t2>=0) & $fibnegfib(x2,?msd_neg_fib t2) &
		?msd_fib n=t1+x2)))":
		# accepts if the integer parts of inputs x and y sum to n
		
		def frougny3 "?msd_fib $frougny1(x,y) & $frougny2(n,x,y)":
		fixleadzero frougny frougny3:
	\end{verbatim}
	Here {\tt phipartleft} (resp., {\tt intpartleft}) computes the
	coefficient of $\varphi$ (resp., the coefficient of $1$)
	in the expression \eqref{berg1}.
	Similarly,
	{\tt phipartright} (resp., {\tt intpartright}) does the same
	thing for \eqref{berg2}.   The command {\tt fixleadzero} allows one
	to remove useless leading zeroes from the resulting automaton.
	
	Next, {\tt frougny1} checks the condition \eqref{cond1} and
	{\tt frougny2} checks the condition \eqref{cond2}.   Finally,
	{\tt frougny} checks the conjunction of the two conditions.
	The resulting automaton {\tt frougny} has 116 states.
	
	If we want a simpler automaton, namely, where $x.y^R$ must
	be a canonical representation (no occurrences of $11$ allowed),
	we can intersect the automaton
	{\tt frougny} with a simple automaton imposing this condition 
	on $x$ and $y$.  This gives us a second Frougny-Sakarovitch
	automaton {\tt saka} with only $39$ states.
	
	\begin{verbatim}
		def saka2 "?msd_fib $frougny(n,?msd_fib x,?msd_fib y) & $no11xy(x,y)":
		fixleadzero saka saka2:
	\end{verbatim}
	
	With this automaton we can easily prove the following characterization of
	the canonical $\varphi$-representation of $F_n$, which is
	the content of both
	\cite[Corollary 6]{Frougny&Sakarovitch:1999} and
	\cite[Proposition~3.1]{Dekking&van.Loon:2023}.
	
	\begin{proposition}
		We have
		\begin{itemize}
			\item $(F_{4i})_\varphi = 100(0100)^{i-1}.(0100)^{i-1}1$, $i \geq 1$;
			\item $(F_{4i+1})_\varphi = 1000(1000)^{i-1}.(1000)^{i-1}1001$, $i \geq 1$;
			\item $(F_{4i+2})_\varphi = 1(0001)^i.(0001)^i$, $i \geq 0$;
			\item $(F_{4i+3})_\varphi = 10(0010)^i.(0010)^i01$, $i \geq 0$.
		\end{itemize}
	\end{proposition}
	
	\begin{proof}
		We apply the automaton {\tt saka} to the Fibonacci numbers $F_n$ for $n\geq 2$:
		\begin{verbatim}
			reg isfib msd_fib "0*10*":
			def prop31a "?msd_fib $saka(n,x,y) & $isfib(n)":
			fixleadzero prop31 prop31a:
		\end{verbatim}
		The resulting automaton is depicted in Figure~\ref{aut1}.
		\begin{figure}[h]
			\begin{center}
				\includegraphics[width=6in]{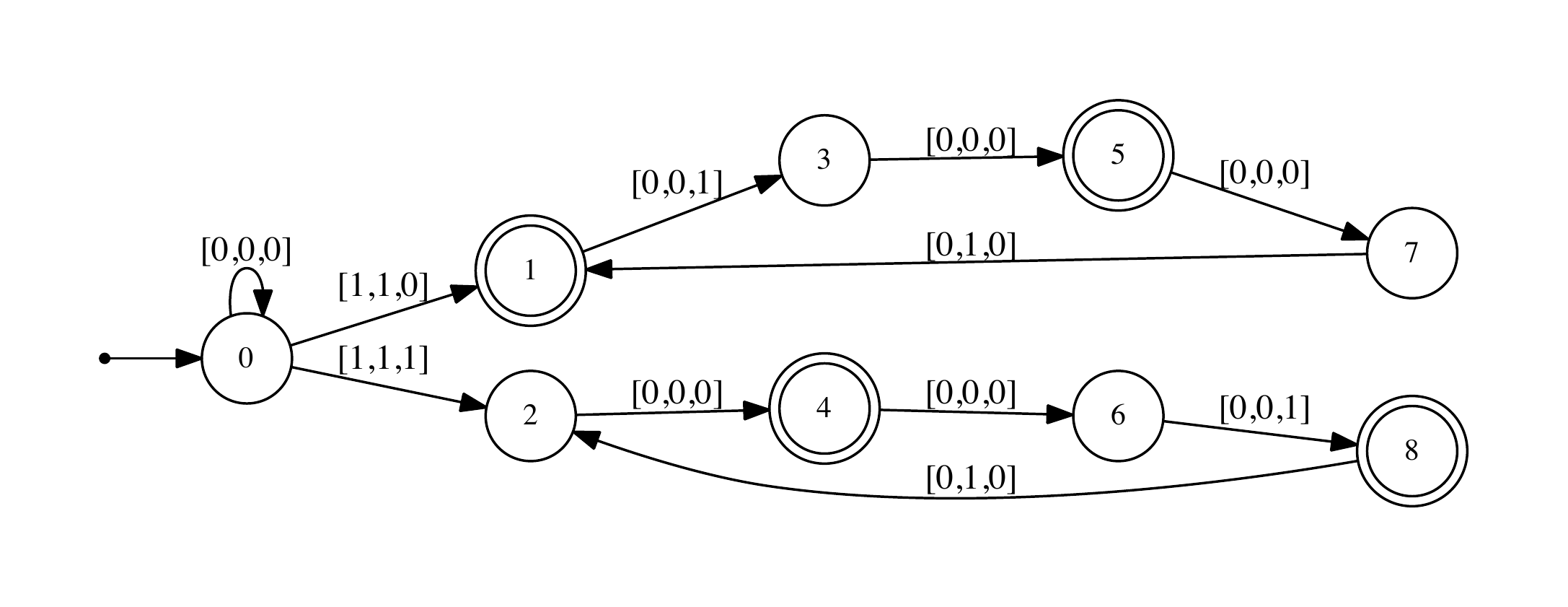}
			\end{center}
			\caption{Canonical $\varphi$-representation of $F_n$.}
			\label{aut1}
		\end{figure}
		
		Inspection of the automaton shows that, ignoring leading zeros,
		there are essentially four different acceptance paths:
		\begin{itemize}
			\item $[1,1,0][0,0,1][0,0,0]([0,0,0][0,1,0][0,0,1][0,0,0])^*$.
			\item $[1,1,1][0,0,0][0,0,0][0,0,1]([0,1,0][0,0,0][0,0,0][0,0,1])^*$;
			\item $[1,1,0]([0,0,1][0,0,0][0,0,0][0,1,0])^*$;
			\item $[1,1,1][0,0,0]([0,0,0][0,0,1][0,1,0][0,0,0])^*$;
		\end{itemize}
		These correspond to the cases $n \equiv 0,1,2,3$ (mod $4$),
		respectively, and prove the result.
	\end{proof}
	
	We can also see from Figure~\ref{aut1} a partial explanation for
	the periodicity (mod 4) in the Frougny-Sakarovitch construction.
	
	Exactly the same techniques can be used to prove a similar
	result for the Lucas numbers, defined by
	$L_0 = 2$, $L_1 = 1$, and $L_n = L_{n-1} + L_{n-2}$.  We omit the details.
	\begin{theorem}
		We have
		\begin{itemize}
			\item $(L_{2i})_\varphi = 10^{2i}.0^{2i-1}1$, $i \geq 1$;
			\item $(L_{2i+1})_\varphi = (10)^i 1.(01)^i$, $i \geq 0$.
		\end{itemize}
	\end{theorem}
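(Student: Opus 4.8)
The plan is to mimic the proof of the preceding Proposition almost verbatim, simply replacing the Fibonacci filter \texttt{isfib} by an analogous filter recognizing the Zeckendorf representations of Lucas numbers, and then reading the claimed patterns off the resulting automaton.

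First I would record the (elementary, induction-free) fact that $L_m = F_{m-1}+F_{m+1}$, so that, since the indices $m-1$ and $m+1$ differ by $2$, this sum is already a legal Zeckendorf representation when $m\ge 3$; concretely $(L_m)_F = 1\,0\,1\,0^{m-3}$ for $m\ge 3$, while the small cases are exceptional: $(L_1)_F = 1$, $(L_2)_F = 100$ (as $L_2 = 3 = F_4$), and $(L_0)_F = 10$. Since the statement only involves $L_m$ with $m\ge 1$, I would build a \texttt{Walnut} regular expression over \texttt{msd\_fib} recognizing exactly $0^*1 \cup 0^*100 \cup 0^*1\,0\,1\,0^*$, namely
\begin{verbatim}
reg isluc msd_fib "0*1(()|00|010*)":
\end{verbatim}
(If one prefers to include $L_0$ as well, one simply adds the word $0^*10$ and discards the corresponding extra branch at the end.) A quick sanity check is to confirm that \texttt{isluc} accepts $n$ precisely when $n\in\{1,3,4,7,11,18,\ldots\}$.

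Next I would form, exactly in parallel with the proof of the Proposition,
\begin{verbatim}
def proplucasa "?msd_fib $saka(n,x,y) & $isluc(n)":
fixleadzero proplucas proplucasa:
\end{verbatim}
Because \texttt{saka} accepts $(n,x,y)$ iff $(n)_\varphi = x.y^R$, and the canonical $\varphi$-representation is unique, \texttt{proplucas} accepts, for each Lucas number $n=L_m$ with $m\ge1$, precisely the folded triple encoding $(L_m)_\varphi$. I would then inspect \texttt{proplucas}: it has only a handful of states, and — since the Frougny--Sakarovitch construction is periodic, here with period $2$ rather than $4$ as for the Fibonacci numbers — ignoring leading zeros there should be essentially two acceptance-path families, one for $m$ even and one for $m$ odd. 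Reading the transition labels of these two loops together with their entry and exit transitions, and then separating the $w$-, $x$- and $y$-tracks, should yield directly the two claimed patterns $10^{2i}.0^{2i-1}1$ and $(10)^i1.(01)^i$. I would finish by checking the base cases ($i=1$ for the even family, $i=0$ for the odd family) by hand, e.g.\ $(L_1)_\varphi = (1)_\varphi = 1.$ and $(L_2)_\varphi = (3)_\varphi = 100.01$, consistent with Table~\ref{tab1}.

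The only genuine obstacle is step one — getting \texttt{isluc} exactly right — because the Zeckendorf representations of $L_0,L_1,L_2$ do not fit the generic shape $1\,0\,1\,0^{m-3}$; everything after that is routine automaton inspection of the kind already carried out for the Proposition. A slightly more automatic alternative to the path-reading step, requiring essentially the same bookkeeping, is to encode the two target patterns as multi-track regular expressions \texttt{luceven} and \texttt{lucodd} (in the folded encoding, with \texttt{*} producing the $i$-fold repetitions) and to have \texttt{Walnut} verify
\[
\forall n,x,y\quad \bigl(\texttt{saka}(n,x,y)\wedge\texttt{isluc}(n)\bigr)\iff\bigl(\texttt{luceven}(n,x,y)\vee\texttt{lucodd}(n,x,y)\bigr),
\]
which, should it return \texttt{TRUE}, establishes the theorem outright.
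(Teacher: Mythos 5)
Your proposal is correct and is essentially the paper's own (omitted) argument: the paper states only that ``exactly the same techniques'' as for the Fibonacci proposition apply, which is precisely what you do — filter \texttt{saka} by a Zeckendorf recognizer for Lucas numbers (correctly handling the exceptional shapes of $(L_1)_F$ and $(L_2)_F$ alongside the generic $1010^{m-3}$) and read the two parity families of acceptance paths off the resulting automaton. Your alternative of checking equivalence against multi-track regular expressions for the claimed patterns is a legitimate, equally automatic variant of the same method.
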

	
	One very useful feature of the automaton {\tt saka} is we can
	use it to completely characterize the folded representations of
	all natural numbers.  
	\begin{theorem}
		The string $a_{r-1} \cdots a_1 a_0.a_{-1} a_{-2} \cdots a_{-r}$
		is the $\varphi$-representation of an integer $n \geq 0$
		if and only if
		$$[a_{r-1}, a_{-r}] [a_{r-2},a_{1-r}] \cdots [a_1,a_{-2}] [a_0, a_{-1}]$$
		is accepted by the automaton in Figure~\ref{beta10}.
		\begin{figure}[htb]
			\begin{center}
				\includegraphics[width=6.3in]{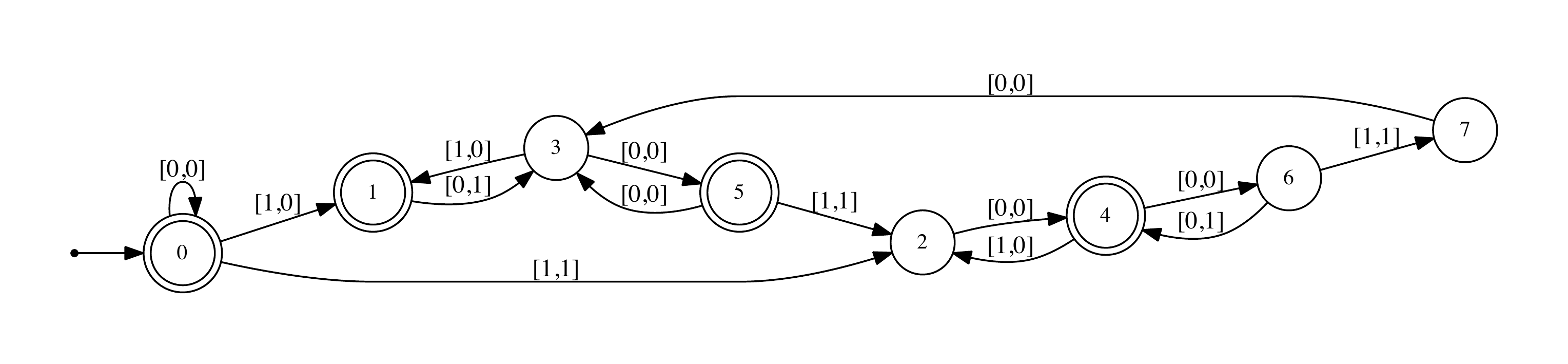}
			\end{center}
			\caption{Automaton accepting the folded representations of all $n \geq 0$.}
			\label{beta10}
		\end{figure}
	\end{theorem}
	\begin{proof}
		We use the {\tt Walnut} code
		\begin{verbatim}
			def berg "?msd_fib En $saka(n,x,y)":
		\end{verbatim}
	\end{proof}

	Let us now turn to characterizing the right part of canonical
	$\varphi$-representations.   Dekking \cite[Theorem~7.3]{Dekking:2023}
	proved the following result:
	\begin{theorem}
		If a binary string is of even length, ends in a $1$, and
		contains no $11$, then it appears as the right part of the
		$\varphi$-representation of some natural number $n$.
	\end{theorem}
	
	\begin{proof}
		To prove this, we make a {\tt Walnut} regular expression
		for strings of even length ending in a $1$.
		Some care is needed here, for two reasons:  first, the representation
		of the right part is ``folded'', so we're really talking about
		beginning with a $1$ instead of ending.  Second, our automata
		may have leading zeros in the (reversed) representation of the right part,
		for the reasons mentioned previously.  Keeping these two points in mind,
		we use the following {\tt Walnut} code:
		\begin{verbatim}
			reg has11 {0,1} "(0|1)*11(0|1)*":
			def rightp2 "?msd_fib En,x $saka(n,x,y)":
			fixleadzero rightp rightp2:
			reg evenl {0,1} "(0*)|(0*1(0|1)((0|1)(0|1))*)":
			eval thm73 "?msd_fib Ay $rightp(y) <=> ($evenl(y) & ~$has11(y))":
		\end{verbatim}
		And {\tt Walnut} returns {\tt TRUE}.
	\end{proof}
	
	In a similar way, we can prove a companion theorem for the left part
	of canonical $\varphi$-representations.
	This appears to be new.
	\begin{theorem}
		A binary string $x$ appears as the left part of a $\varphi$-representation
		of some $n$ if and only if $x$ has no $11$ and does not have a suffix
		of the form $1(00)^i 1$.
	\end{theorem}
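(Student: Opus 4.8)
The plan is to mirror the recipe of the two preceding theorems. I would build an automaton whose language is exactly the set of strings occurring as left parts of canonical $\varphi$-representations, write a regular expression for the claimed characterization, and then ask {\tt Walnut} to verify that the two languages coincide on all binary strings. The one structural point worth noting in advance is that, in contrast to the right part, the left part is \emph{not} folded: the automaton {\tt saka} reads it in the natural order $a_{r-1}\cdots a_0$, so ``a suffix of the form $1(00)^i 1$'' is literally a suffix of the input word and no reversal bookkeeping is needed. The only convention to keep in mind is the treatment of leading zeros, and that is handled by {\tt fixleadzero} together with the (evident) fact that neither ``$x$ has no $11$'' nor ``$x$ ends in $1(00)^i 1$'' is affected by prepending zeros.

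Concretely, I would reuse the regular expression {\tt has11} from the proof of the previous theorem and run the following code:
\begin{verbatim}
def leftp2 "?msd_fib En,y $saka(n,x,y)":
fixleadzero leftp leftp2:
reg badsuff {0,1} "(0|1)*1(00)*1":
eval thmleft "?msd_fib Ax $leftp(x) <=> (~$has11(x) & ~$badsuff(x))":
\end{verbatim}
By the defining property of {\tt saka}, the automaton {\tt leftp} accepts a word $x$ if and only if there are $n\ge 0$ and $y$ with $(n)_\varphi = x.y^R$, i.e., if and only if $x$ is the left part of the canonical $\varphi$-representation of some $n$. In {\tt badsuff}, the trailing {\tt 1(00)*1} is forced to sit at the very end of the input while the leading {\tt (0|1)*} absorbs everything before it, so {\tt badsuff} accepts exactly those $x$ having a suffix $1\,0^{2i}\,1$ for some $i\ge 0$; the case $i=0$ is the block $11$, which the no-$11$ clause already rules out, so it is harmless to let {\tt (00)*} match the empty word. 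I expect {\tt Walnut} to return {\tt TRUE} on {\tt thmleft}, which is the theorem.

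All of the mathematical content is packed into that final equivalence, and deciding such a statement is exactly what {\tt Walnut} does, with no induction required; so the ``hard part'' is not really an argument at all but a matter of faithful formalization. The two places where a slip would be easy — and hence what I would double-check most carefully — are: (i) that {\tt badsuff} captures precisely the suffix condition, with the correct right-anchoring and the parity of {\tt (00)*} matching the $(00)^i$ of the statement; and (ii) that {\tt leftp} is the set of left parts of \emph{canonical} representations, which is why it must be derived from {\tt saka} rather than from {\tt frougny} (over arbitrary $\varphi$-representations, strings such as $1001$ do occur as left parts, so the theorem would then be false). As a sanity check I would confirm by hand that the empty word (the left part of $0$) and the words $1$, $10$, $100$, $101$, $1010$ are all accepted by {\tt leftp} and none match {\tt badsuff}; if {\tt Walnut} ever returned {\tt FALSE}, this would point to a modelling error rather than to a defect in the claim.
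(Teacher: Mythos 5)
Your proposal matches the paper's proof essentially verbatim: the same construction of {\tt leftp} from {\tt saka} via existential quantification over $n$ and $y$ followed by {\tt fixleadzero}, the same regular expression {\tt (0|1)*1(00)*1} for the forbidden suffix, and the same final {\tt Walnut} equivalence check (the paper calls the suffix predicate {\tt suff} and the query {\tt claim}, but the content is identical). Your additional remarks about the left part being unfolded and the need to use {\tt saka} rather than {\tt frougny} are correct and consistent with the paper's setup.
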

	
	\begin{proof}
		We use the following {\tt Walnut} code:
		\begin{verbatim}
			def leftp2 "?msd_fib En,y $saka(n,x,y)":
			fixleadzero leftp leftp2:
			reg suff {0,1} "(0|1)*1(00)*1":
			eval claim "?msd_fib Ax $leftp(x) <=> ((~$suff(x))&(~$has11(x)))":
		\end{verbatim}
		And {\tt Walnut} returns {\tt TRUE}.
	\end{proof}
	
	It is easy to see that every
	positive integer has infinitely many (possibly non-canonical)
	$\varphi$-representations; for example, we may
	write 
	$$2 = [10.01]_\varphi = [10.0011]_\varphi = [10.001011]_\varphi = \cdots .$$
	However, there can only be finitely many distinct left parts among
	all of these.  How many are there?
	We can use {\tt Walnut} to ``automatically'' 
	compute a special kind of formula for
	this quantity, called a linear representation.
	
	A {\it linear representation\/} for a function $f$ from
	$\Sigma^*$ to $\Enn$ is a triple
	$(v, \gamma, w)$, where $v$ is a $t$-element row vector,
	$\gamma$ is a $t\times t$ matrix-valued morphism,
	and $w$ is a $t$-element column vector, such that
	$f(x) = v \gamma(x) w$.  The {\it rank\/} of such a representation
	is defined to be $t$.  Two linear representations are said
	to be equivalent if they compute the same function of $x$.  There is an
	algorithm that, given a linear representation, finds an equivalent minimal
	one:  that is, one of minimal rank \cite[Chapter~2]{Berstel&Reutenauer:2011}.
	A nice feature of linear representations is that they allow efficient
	computation of the corresponding function.
	For more information about
	linear representations, see
	\cite{Berstel&Reutenauer:2011}.  
	
	Let us find a linear
	representation for $p(n)$,
	the number of distinct left parts of $\varphi$-representations
	for $n$.  We can do this with the following {\tt Walnut} command:
	\begin{verbatim}
		def numleft n "?msd_fib Ey $frougny(n,x,y)":
	\end{verbatim}
	This gives us a linear representation of rank $112$ to
	compute $p(n)$, which can be minimized to a linear representation of
	rank $28$.
	The first few terms of this sequence are given in Table~\ref{tableft}.
	\begin{table}[htb]
		\begin{center}
			\begin{tabular}{c|ccccccccccccccccccccc}
				$n$ & 0 & 1 & 2 & 3 & 4 & 5 & 6 & 7 & 8 & 9 & 10 & 11 & 12 & 13 & 14 & 15 & 16 &17 & 18 & 19 \\
				\hline
				$p(n)$ & 1&2&2&3&3&3&3&4&5&4&5&4&4&5&6&6&5&4&5&7
			\end{tabular}
		\end{center}
		\caption{First few values of $p(n)$.}
		\label{tableft}
	\end{table} 
	This is sequence \seqnum{A362970} in the OEIS.
	
	\section{Base-\texorpdfstring{$\varphi$}{φ} sum of digits}
	\label{five}
	
	Dekking \cite{Dekking:2021} studied the sum-of-digits function for
	base-$\varphi$ representations.  
	Denoting by $n = \sum_{L \leq i \leq R} d_i \varphi^i$ the canonical
	base-$\varphi$ representation,
	then we define $s(n) = \sum_{L \leq i \leq R} d_i$.
	The sequence $s(n)$ is 
	sequence \seqnum{A055778} in the OEIS.
	
	Let us compute a linear representation for $s(n)$.
	The idea
	is to define an automaton with two binary inputs $x,y$ that accepts precisely
	if $y$ has a single $1$ in a position matching a $1$ in $x$.   Then
	the number of pairs $(x,y)$ accepted corresponds to the number of $1$'s
	in $x$.  We can then find linear representations for the
	left and right parts of $(n)_\varphi$.
	\begin{verbatim}
		reg match1 {0,1} {0,1} "([0,0]|[1,0])*[1,1]([0,0]|[1,0])*":
		def countl "?msd_fib Ex,y $saka(n,x,y) & $match1(x,t)":
		def countr "?msd_fib Ex,y $saka(n,x,y) & $match1(y,u)":
		def sl n "?msd_fib $countl(n,t)":
		def sr n "?msd_fib $countr(n,u)":
	\end{verbatim}
	This gives a linear representation of rank $99$ for $s_L$ and 
	one of rank $82$ for $s_R$.  When minimized, we get a representation
	of rank $19$ for $s_L$ and one of rank $21$ for $s_R$.  We can then
	find a linear representation of rank $40$ for $s$ by summing these
	two linear representations.   This can be minimized to a
	linear representation for $s$ of rank $21$.
	
	We now show how to apply our automaton {\tt saka} to solve an open problem
	due to Dale Gerdemann in 2012, and stated in the text for 
	sequence \seqnum{A055778} in the OEIS.  Once again, the proof is
	more or less purely computational, but here we need some additional
	computational tools.
	
	\begin{theorem}
		The sum of the
		digits for $n$ in Zeckendorf representation is always at most the
		sum of the digits for $n$ in canonical base-$\varphi$ representation.
		\label{gerdemann}
	\end{theorem}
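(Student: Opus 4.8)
The plan is to reduce the inequality to something {\tt Walnut} can handle directly. Write $z(n)$ for the sum of digits of $(n)_F$, and recall from Section~\ref{five} that we already have a linear representation of rank $21$ for the base-$\varphi$ digit sum $s(n)$. A linear representation for $z(n)$ is even easier: applying the {\tt match1} idea to $(n)_F$ itself, the number of accepted pairs $(n,t)$ is exactly the number of $1$'s in $(n)_F$, so {\tt Walnut} hands back a small linear representation for $z$. Forming the direct sum of the two representations, with the column vector of the one for $z$ negated, gives a linear representation for $h(n) := s(n) - z(n)$, which we then minimize. The theorem is precisely the assertion that $h(n) \ge 0$ for all $n \ge 0$.

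The obstacle is that nonnegativity of a function presented by a linear representation is not itself a first-order statement, so it cannot simply be thrown at {\tt Walnut} as an {\tt eval}. The additional tool I would use is to realize $h(n)$ as the cardinality of an explicitly recognizable set — equivalently, to produce an \emph{automatic injection} from the $1$-positions of $(n)_F$ into the $1$-positions of the folded canonical $\varphi$-representation of $n$. Concretely, I would look (guided by the mod-$4$ structure visible in Figure~\ref{aut1}) for an automaton {\tt match}, with free variables $n$ (in Zeckendorf form) and two position variables $i,j$, and with $x,y$ existentially bound via {\tt saka}$(n,x,y)$, such that {\tt match}$(n,i,j)$ forces bit $i$ of $(n)_F$ and bit $j$ of the concatenation of the left and right parts of $(n)_\varphi$ to both equal $1$. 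Once a candidate {\tt match} is built, three {\tt Walnut} checks finish the argument: that it sends $1$'s to $1$'s (built into the definition, but re-checkable), that it is injective in $j$ (at most one $i$ per $j$), and that it is total on the $1$-positions of $(n)_F$ (every such position gets an image). If all three return {\tt TRUE}, then $z(n)$ equals the number of matched positions, which is at most the total number of $1$-positions of $(n)_\varphi$, namely $s(n)$; as a sanity check one can also verify that the minimized linear representation of $h$ agrees with the count of unmatched $\varphi$-positions.

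The step I expect to be the real work is the design of {\tt match}. A naive rule — for instance, sending the $i$-th Fibonacci $1$ to "the nearest $1$ on the appropriate side" — is likely to violate injectivity or totality for some $n$: note already that $z(n) \le s_L(n)$ fails at $n = 7$ (there $(7)_F = 10001$ but the left part of $(7)_\varphi$ is $10000$), so the matching genuinely has to distribute Fibonacci $1$'s across both the left and right parts. One may therefore have to iterate the definition a few times, using {\tt Walnut}'s counterexample output to patch the rule, before the three checks all pass. Should no sufficiently simple automatic matching present itself, the fallback is to prove nonnegativity of the minimized linear representation of $h$ by hand — e.g.\ by exhibiting a rational polyhedral cone stabilized by all of its transition matrices and containing the initial vector — but I expect the matching route to be cleaner and more in keeping with the rest of the paper.
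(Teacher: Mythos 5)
Your reduction is set up in the right spirit (and your observation that nonnegativity of a function given by a linear representation is not something one can simply hand to {\tt Walnut} matches the paper's own remark), but the argument has a genuine gap: the object that would actually carry the proof --- the matching automaton {\tt match}$(n,i,j)$ --- is never constructed, and nothing you say guarantees that such an automatic injection exists. The pointwise inequality $z(n)\le s(n)$ would give, for each individual $n$, \emph{some} injection of the $1$-positions of $(n)_F$ into those of the folded $\varphi$-representation, but it does not give a single first-order-definable matching uniform in $n$; the natural candidate ``send the $k$-th $1$ to the $k$-th $1$'' requires counting positions, which is not expressible in the logic {\tt Walnut} decides, and as you yourself note ($z(7)>s_L(7)$) simple local rules fail. ``Iterate the definition using counterexamples until the three checks pass'' is therefore a search with no termination guarantee, and the fallback (a polyhedral cone invariant under the transition matrices of the minimized representation of $h$) is likewise only named, not exhibited. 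As written, the proposal is a research plan conditioned on finding an object that may not exist in the required form, not a proof.

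The paper closes exactly this gap with a different and fully effective device. Since {\tt saka} reads the three digit streams in parallel, each transition is labelled $[a,b,c]$ with $a$ the Zeckendorf digit and $b,c$ the two folded $\varphi$-digits; re-weight that edge by $b+c-a$. Then for every accepted input the weight of the accepting path is $|(n)_\varphi|_1-|(n)_F|_1$, so the theorem follows from the stronger, finitely checkable claim that \emph{every} path from the initial state has nonnegative weight, which is verified by running the Bellman--Ford minimum-weight-path algorithm on this weighted graph. If you wish to salvage your approach, the analogous finite certificate for your representation of $h(n)=s(n)-z(n)$ is precisely such a no-negative-reachable-path (or invariant-cone) computation on a weighted transition graph; without some certificate of this kind actually produced and checked, your argument does not yet establish the theorem.
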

	
	\begin{proof}
		Let $|x|_a$ denote the number of occurrences of the symbol $a$
		in the string $x$.
		It is easy, using the techniques above, to obtain a linear
		representation of rank $23$ for the difference $|(n)_\varphi|_1 - |(n)_F|_1$.
		However, there is no algorithm for determining whether a given
		linear representation computes a function that is always non-negative,
		so this linear representation does not seem to be useful to solve 
		the problem.  Instead, we use a different technique.
		
		Recall that the automaton ${\tt saka}$ takes three inputs in
		parallel---$n$, $x$, and $y$---and accepts if
		$x.y^R$ is the canonical $\varphi$-representation for $n$.
		The transitions of this automaton are therefore
		of the form $[a,b,c]$ where $a,b,c \in \{0,1\}$.  Suppose an input
		$a_1 \cdots a_t$, $b_1 \cdots b_t$, $c_1 \cdots c_t$ is accepted.
		Then $|(n)_\varphi|_1 - |(n)_F|_1 = \sum_{1 \leq i \leq t} (b_i+c_i)-a_i$.
		
		Therefore, we can take the automaton for ${\tt saka}$ and form
		a directed graph $G$ from it by replacing the transition on
		the triple $[a,b,c]$ with a weight labelled $b+c-a$.
		The assertion that $|(n)_\varphi|_1 - |(n)_F|_1$ is always non-negative
		then follows from the following stronger
		claim:
		\begin{equation}
			\text{Every path from the initial state to any state has
				a non-negative sum of weights.} \label{sclaim}
		\end{equation}
		But this is precisely the minimum-weight path problem solved by the
		Bellman-Ford algorithm.  We ran this algorithm on $G$ and
		verified that assertion~\eqref{sclaim} holds.
	\end{proof}
	
	Next, one can consider those $n$ for which $|(n)_F|_1 = |(n)_\varphi|_1$.
	These correspond to the paths of weight $0$ in the graph $G$ just discussed.
	One can easily verify, by explicit enumeration, that all simple cycles of
	$G$ (i.e., no repeated vertices except at the beginning and end of the cycle)
	have weight $\geq 0$, and furthermore one can determine all those of
	weight $0$ (there are $26$ of them).
	
	Let $P$ be an accepting path of weight $0$.   If $P$ contains a cycle
	$C$, by above the weight of $C$ is non-negative, by removing $C$,
	we get an accepting path of $P'$ of weight $\leq 0$.
	But there are no negative-weight
	accepting paths, so $C$ must also be of weight $0$ and hence so
	is $P'$.   We can then rule out possible cycles by considering
	the lowest-weight path from the initial state to the first state of $C$
	and similarly from the first state of $C$ to a final state; these
	weights must sum to $0$.
	Of the $26$ possible cycles, only three possibilities
	remain (up to choice of the initial state of the cycle).
	By considering the possible cycles and how they
	join up, we get the automaton in Figure~\ref{wt0}.
	
	\begin{figure}[h]
		\begin{center}
			\includegraphics[width=6.3in]{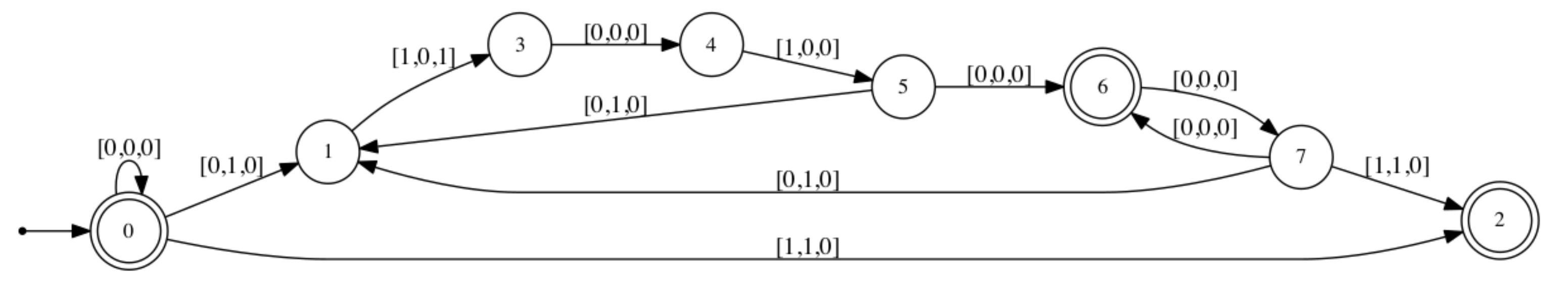}
		\end{center}
		\caption{Automaton for weight-$0$ paths.}
		\label{wt0}
	\end{figure}

	To get the accepted set of $n$, we project to the first coordinate
	and determinize the resulting automaton.  Thus we have proved
	the following result:
	\begin{figure}[h]
		\begin{center}
			\includegraphics[width=4.2in]{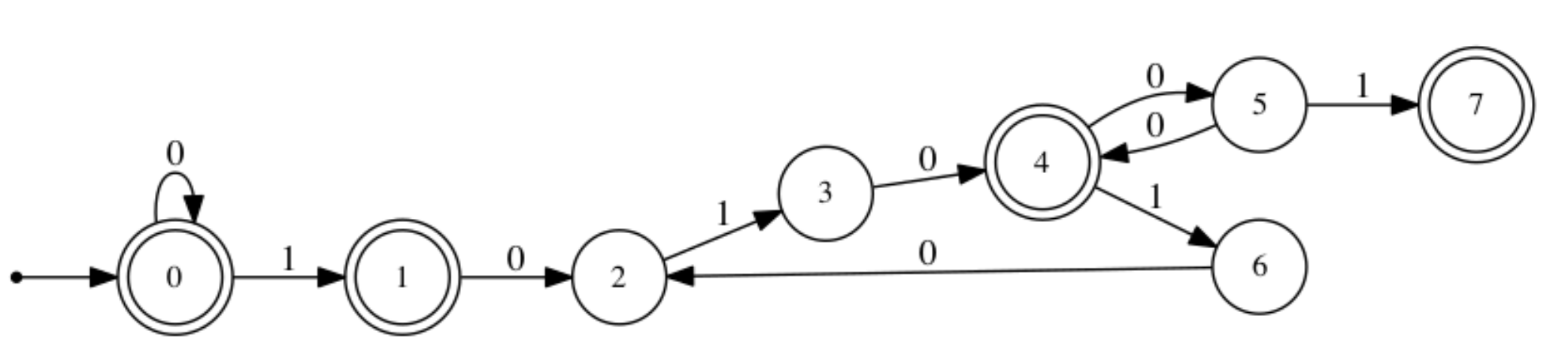}
		\end{center}
		\caption{Automaton for weight-$0$ paths.}
		\label{weight0}
	\end{figure}
	\begin{theorem}
		The automaton in Figure~\ref{weight0} accepts precisely those $n$ in Fibonacci representation
		for which $|(n)_F|_1 = |(n)_\varphi|_1$.
	\end{theorem}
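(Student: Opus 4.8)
The plan is to exploit the weighted graph $G$ built from {\tt saka}, together with the weight-$0$ analysis already carried out above. By the correspondence underlying {\tt saka}, an integer $n$ satisfies $|(n)_F|_1 = |(n)_\varphi|_1$ if and only if there is an accepting path in $G$ whose first-coordinate labels spell $(n)_F$ and whose total weight $\sum_i (b_i + c_i - a_i)$ is $0$. So it suffices to prove two things: (i) the set of accepting paths of $G$ of total weight $0$ is exactly the language of triples read by the automaton in Figure~\ref{wt0}, and (ii) projecting that automaton onto its first coordinate and determinizing yields Figure~\ref{weight0}.

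First I would pin down the weight-$0$ accepting paths combinatorially. Every accepting path in $G$ decomposes as a simple (vertex-disjoint) path from the initial state to a final state with simple cycles attached along it; since all simple cycles of $G$ have non-negative weight and the acyclic skeleton likewise has non-negative weight (the $26$-cycle enumeration and the Bellman--Ford computation recorded above), a path has total weight $0$ exactly when its skeleton and every cycle occurring in it have weight $0$. Hence only the $26$ weight-$0$ simple cycles and the weight-$0$ acyclic segments can participate.

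Next I would prune the $26$ candidate cycles. For a weight-$0$ simple cycle $C$ with entry vertex $v$, the weight of any accepting path passing through $C$ equals (minimum weight of a path from the initial state to $v$) $+$ (weight of the intervening material) $+$ (minimum weight of a path from $v$ to a final state), and all three summands are $\ge 0$; so $C$ can occur only when both of those minimum weights are $0$. Computing them with Bellman--Ford in $G$ and in its reversal eliminates all but three cycles, up to rotation. I would then determine how these three survivors, together with the admissible weight-$0$ acyclic connecting segments, can be concatenated---in effect restricting $G$ to the vertices and edges that lie on some weight-$0$ accepting path---to obtain the automaton of Figure~\ref{wt0}, which by construction accepts precisely the weight-$0$ accepting triples. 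Finally, projecting this automaton onto its $n$-coordinate (forgetting $x$ and $y$) and determinizing produces the automaton of Figure~\ref{weight0}; since the projection of the accepted triples is exactly $\{\,(n)_F : |(n)_F|_1 = |(n)_\varphi|_1\,\}$, the theorem follows.

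The main obstacle is the middle step: enumerating the simple cycles of the $39$-state {\tt saka} automaton's weighted version and checking their weights, and then---the delicate part---working out exactly how the three surviving weight-$0$ cycles link to each other and to the permitted acyclic segments, so that the resulting automaton neither over- nor under-generates the weight-$0$ accepting paths. Once that gluing is verified, the projection and determinization are routine.
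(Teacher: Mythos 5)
Your proposal follows essentially the same route as the paper: reduce the statement to weight-$0$ accepting paths in the weighted graph $G$ built from {\tt saka}, enumerate the simple cycles (all of weight $\geq 0$, with $26$ of weight $0$), prune them by shortest-path (Bellman--Ford) computations from the initial state and to the final states, glue the three surviving cycles with the admissible weight-$0$ segments to obtain the automaton of Figure~\ref{wt0}, and finally project onto the $n$-coordinate and determinize to get Figure~\ref{weight0}. The only small imprecision is in your pruning step: the weight of an accepting path through the cycle's entry vertex is \emph{at least} (not equal to) the sum of the two minimum weights, and the paper's criterion is that these two minima must \emph{sum} to $0$; concluding that each vanishes separately needs the additional remark that both are non-negative, but this does not affect the argument.
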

	
	Similarly, one can study the sum of all the bits of the base-$\varphi$
	representation, taken modulo $2$.
	\begin{verbatim}
		reg sum2 {0,1} {0,1} "([0,0]|[1,1])*(([0,1]|[1,0])([0,0]|[1,1])*
		([0,1]|[1,0])([0,0]|[1,1])*)*([0,1]|[1,0])([0,0]|[1,1])*":
		# sum of all the bits of x and y, mod 2
		
		def sdpr2 "?msd_fib Ex,y $saka(n,x,y) & $sum2(x,y)":
		# sum of digits of phi-rep mod 2 is 1
	\end{verbatim}
	
	This gives a $73$-state automaton for this binary sequence.  It is
	sequence \seqnum{A330037} in the OEIS.

	The automata {\tt frougny} and {\tt saka} that we have constructed
	make it very easy to compute other quantities related to $\varphi$-representations.
	For example, let us define $s_L(n)$ to be the sum of the bits of the
	left part of the $\varphi$-representation of $n$.   This is  apparently
	a new sequence, never before studied; it is
	sequence \seqnum{A362716} in the
	OEIS.  (Dekking
	\cite{Dekking:2020,Dekking:2021} studied the sum $s(n)$ of {\it all\/} the bits
	of the $\varphi$-representation.)
	\begin{table}[htb]
		\begin{center}
			\begin{tabular}{c|ccccccccccccccccccccc}
				$n$ & 0 & 1 & 2 & 3 & 4 & 5 & 6 & 7 & 8 & 9 & 10 & 11 & 12 & 13 & 14 & 15 & 16 &17 & 18 & 19 \\
				\hline
				$s_L(n)$ & 0&1&1&1&2&1&2&1&2&2&2&3&1&2&2&3&2&3&1&2 
			\end{tabular}
		\end{center}
		\caption{First few values of $s_L(n)$.}
	\end{table}
	
	We can easily find a linear representation for $s_L(n)$ as follows:
	\begin{verbatim}
		reg match1 {0,1} {0,1} "([0,0]|[1,0])*[1,1]([0,0]|[1,0])*":
		def count1 "?msd_fib Ex,y $saka(n,x,y) & $match1(x,t)":
		def sl n "?msd_fib $count1(n,t)":
	\end{verbatim}
	This gives us a linear representation of rank $99$, which can
	be minimized to the following linear representation
	$(v, \gamma, w)$ of rank $19$:
	$$
	v = \left[ \begin{smallarray}{ccccccccccccccccccc}
		1&0&0&0&0&0&0&0&0&0&0&0&0&0&0&0&0&0&0
	\end{smallarray}\right] 
	$$
	\begin{align*}
		\gamma(0) & = \left[ \begin{smallarray}{ccccccccccccccccccc}
			1& 0& 0& 0& 0& 0& 0& 0& 0& 0& 0& 0& 0& 0& 0& 0& 0& 0& 0\\
			0& 0& 1& 0& 0& 0& 0& 0& 0& 0& 0& 0& 0& 0& 0& 0& 0& 0& 0\\
			0& 0& 0& 1& 0& 0& 0& 0& 0& 0& 0& 0& 0& 0& 0& 0& 0& 0& 0\\
			0& 0& 0& 0& 0& 1& 0& 0& 0& 0& 0& 0& 0& 0& 0& 0& 0& 0& 0\\
			0& 0& 0& 0& 0& 0& 0& 1& 0& 0& 0& 0& 0& 0& 0& 0& 0& 0& 0\\
			0& 0& 0& 0& 0& 0& 0& 0& 1& 0& 0& 0& 0& 0& 0& 0& 0& 0& 0\\
			0& 0& 0& 0& 0& 0& 0& 0& 0& 0& 1& 0& 0& 0& 0& 0& 0& 0& 0\\
			0& 0& 0& 0& 0& 0& 0& 0& 0& 0& 0& 1& 0& 0& 0& 0& 0& 0& 0\\
			0& 0& 1&-1& 0& 0& 0& 0& 0& 0& 1& 0& 0& 0& 0& 0& 0& 0& 0\\
			0& 0& 0& 0& 0& 0& 0& 0& 0& 0& 0& 0& 0& 0& 1& 0& 0& 0& 0\\
			0& 0& 0&-1& 0& 1& 0& 0& 0& 0& 1& 0& 0& 0& 0& 0& 0& 0& 0\\
			0& 0& 0& 0& 0& 0& 0& 0& 0& 0& 0& 0& 0& 0& 0& 0& 1& 0& 0\\
			0& 0& 1&-1& 0& 0& 0& 0& 0& 0& 1& 0& 0& 0& 0& 0& 0& 0& 0\\
			0& 0& 0&-1& 0& 1& 0& 0& 0& 0& 1& 0& 0& 0& 0& 0& 0& 0& 0\\
			0& 0& 0& 0& 0& 0& 0& 0& 0& 0& 0& 0& 0& 0& 0& 0& 0& 1& 0\\
			0& 0& 0&-1& 0& 0& 0& 0& 0& 0& 2& 0& 0& 0& 0& 0& 0& 0& 0\\
			0& 0& 0& 0& 0& 0& 0& 0& 0& 0& 0& 0& 0& 0& 0& 0& 0& 0& 1\\
			0& 0& 0&-1& 0& 0& 0& 0& 0& 0& 2& 0& 0& 0& 0& 0& 0& 0& 0\\
			0& 0& 0& 0& 0& 0& 0&-1& 0& 0& 0& 0& 0& 0& 0& 0& 2& 0& 0
		\end{smallarray} \right]
		&\!
		\gamma(1) &= \left[\begin{smallarray}{ccccccccccccccccccc}
			0& 1& 0& 0& 0& 0& 0& 0& 0& 0& 0& 0& 0& 0& 0& 0& 0& 0& 0\\
			0& 0& 0& 0& 0& 0& 0& 0& 0& 0& 0& 0& 0& 0& 0& 0& 0& 0& 0\\
			0& 0& 0& 0& 1& 0& 0& 0& 0& 0& 0& 0& 0& 0& 0& 0& 0& 0& 0\\
			0& 0& 0& 0& 0& 0& 1& 0& 0& 0& 0& 0& 0& 0& 0& 0& 0& 0& 0\\
			0& 0& 0& 0& 0& 0& 0& 0& 0& 0& 0& 0& 0& 0& 0& 0& 0& 0& 0\\
			0& 0& 0& 0& 0& 0& 0& 0& 0& 1& 0& 0& 0& 0& 0& 0& 0& 0& 0\\
			0& 0& 0& 0& 0& 0& 0& 0& 0& 0& 0& 0& 0& 0& 0& 0& 0& 0& 0\\
			0& 0& 0& 0& 0& 0& 0& 0& 0& 0& 0& 0& 1& 0& 0& 0& 0& 0& 0\\
			0& 0& 0& 0& 0& 0& 0& 0& 0& 0& 0& 0& 0& 1& 0& 0& 0& 0& 0\\
			0& 0& 0& 0& 0& 0& 0& 0& 0& 0& 0& 0& 0& 0& 0& 0& 0& 0& 0\\
			0& 0& 0& 0& 0& 0& 0& 0& 0& 0& 0& 0& 0& 0& 0& 1& 0& 0& 0\\
			0& 1& 0& 0& 0& 0&-1& 0& 0& 0& 0& 0& 0& 0& 0& 1& 0& 0& 0\\
			0& 0& 0& 0& 0& 0& 0& 0& 0& 0& 0& 0& 0& 0& 0& 0& 0& 0& 0\\
			0& 0& 0& 0& 0& 0& 0& 0& 0& 0& 0& 0& 0& 0& 0& 0& 0& 0& 0\\
			0& 0& 0& 0& 0& 0& 0& 0& 0& 0& 0& 0& 0& 1& 0& 0& 0& 0& 0\\
			0& 0& 0& 0& 0& 0& 0& 0& 0& 0& 0& 0& 0& 0& 0& 0& 0& 0& 0\\
			0& 0& 0& 0& 0& 0& 0& 0& 0& 0& 0& 0& 1& 0& 0& 0& 0& 0& 0\\
			0& 0& 0& 0& 1& 0&-1& 0& 0& 0& 0&-1& 0& 0& 0& 1& 0& 0& 1\\
			0& 1& 0& 0& 0& 0&-1& 0& 0& 0& 0& 0& 0& 0& 0& 1& 0& 0& 0
		\end{smallarray}\right]
		& \!
		w &= \left[ \begin{smallarray}{c}
			0\\
			1\\
			1\\
			1\\
			2\\
			1\\
			2\\
			1\\
			2\\
			2\\
			2\\
			3\\
			1\\
			2\\
			3\\
			3\\
			1\\
			3\\
			4
		\end{smallarray}
		\right] .
	\end{align*}
	
	Similarly, one can study the sum-of-digits function
	$s_R (n)$ for the right part of the
	canonical $\varphi$-representation.   We prove a new theorem about it:
	\begin{theorem}
		The difference $d(n) := s_R(n+1)-s_R(n)$ is a Fibonacci-automatic sequence
		taking values in $\{ -1,0,1\}$ only.
	\end{theorem}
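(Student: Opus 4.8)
The plan is to reduce the statement to a finite computation on a weighted directed graph, following the strategy already used for Theorem~\ref{gerdemann}. As a first, motivating step I would produce a linear representation for $d(n)$: starting from the rank-$21$ linear representation for $s_R$ constructed earlier, a linear representation for $n \mapsto s_R(n+1)$ is obtained in {\tt Walnut} essentially by shifting the argument, e.g.\ by a command of the form
\begin{verbatim}
def srp1 n "?msd_fib Em,x,y $saka(m,x,y) & m=n+1 & $match1(y,u)":
\end{verbatim}
and subtracting the two linear representations yields one for $d(n)=s_R(n+1)-s_R(n)$, which can be minimized. Evaluating this linear representation on the first several hundred values of $n$ confirms that every value seen lies in $\{-1,0,1\}$. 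However, since there is no algorithm to decide whether a given linear representation computes a bounded function (this is exactly the obstruction mentioned in the proof of Theorem~\ref{gerdemann}), this does not prove the theorem, and a structural argument is required.

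For that argument I would build a finite automaton $M$ that reads, in parallel, $(n)_F$, the left part $x_1$ and folded right part $y_1$ of $(n)_\varphi$, and the left part $x_2$ and folded right part $y_2$ of $(n+1)_\varphi$; concretely $M$ is the product of ${\tt saka}(n,x_1,y_1)$ with ${\tt saka}(n{+}1,x_2,y_2)$, with all tracks padded with leading zeros to a common length in the usual way and an added synchronized-successor constraint linking the two Fibonacci tracks. Because the two folded right parts are aligned by this padding, the $i$-th symbol read contributes exactly $(y_2)_i-(y_1)_i\in\{-1,0,1\}$ to $s_R(n+1)-s_R(n)$, so $d(n)=\sum_i\big((y_2)_i-(y_1)_i\big)$. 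I would therefore turn $M$ into a weighted digraph $G$ by labelling each transition with that integer weight; then, up to leading zeros, there is for each $n$ a unique accepting path in $M$, and its total weight equals $d(n)$.

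Next I would bound $d(n)$ by analyzing $G$. Running the Bellman--Ford algorithm from the start state shows that the minimum weight of any path to any state is $\ge -1$; a dual computation—longest path after sign reversal, or, as in the weight-$0$ analysis following Theorem~\ref{gerdemann}, explicit enumeration of the simple cycles that lie on accepting paths—shows that every such cycle has weight $0$, hence that the maximum weight of an accepting path is $\le 1$. Thus $d(n)\in\{-1,0,1\}$ for all $n$. For automaticity, observe that once all accepting-path cycles have weight $0$, the running partial weight along any accepting path stays within $[-K,K]$ for a constant $K$ bounded by the number of states of $M$ (deleting the closed sub-walks traversed so far leaves a simple path). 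Hence the product automaton with state set $\{(q,p)\suchthat q$ a state of $M,\ p\in[-K,K]\}$, with $(q,v)$ declared final when $q$ is final in $M$, is finite; for each $v\in\{-1,0,1\}$, projecting it onto the first ($n$) coordinate and determinizing gives a Fibonacci automaton accepting $\{n\suchthat d(n)=v\}$, and merging the three produces the Fibonacci-automatic sequence computing $d$.

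The main obstacle is the second half of the bounding step: proving that no accepting path can accumulate unbounded positive (or negative) weight, i.e.\ that every cycle on an accepting path has weight exactly $0$. As in the discussion after Theorem~\ref{gerdemann}, a plain shortest/longest-path computation is not by itself conclusive, because $G$ may contain cycles of both signs that are not simultaneously reachable-and-co-reachable with respect to accepting states; the fix is the same finite but somewhat delicate check—enumerate the simple cycles and, for each candidate nonzero cycle, verify that the minimum-weight path into it plus the minimum-weight path out of it already leaves the range $\{-1,0,1\}$, so that such a cycle cannot occur on any accepting path. Everything else is a routine product-and-subset construction, and the per-step contribution $(y_2)_i-(y_1)_i$ being confined to $\{-1,0,1\}$ (which makes $d(n)=\sum_i\big((y_2)_i-(y_1)_i\big)$ meaningful position-by-position) is immediate once the alignment of the folded right parts is set up correctly.
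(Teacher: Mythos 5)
Your overall plan is workable in spirit, but it both misjudges the simpler route and contains one genuine logical gap. On the first point: the paper proves the theorem precisely through the linear-representation route that you set aside. After minimizing the rank-$38$ representation for $d(n)$ to rank $17$, it applies the ``semigroup trick'' of \cite[\S 4.11]{Shallit:2023}. The undecidability of boundedness for general linear representations does not block this: the trick is a semi-decision procedure which, when it terminates (as it does here), yields the Fibonacci DFAO of Figure~\ref{srd}, and both automaticity and the value set $\{-1,0,1\}$ are then immediate from the finitely many outputs of that DFAO. So the ``motivating step'' you discard is in fact the entire proof, and no weighted-graph analysis is needed.

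On the second point: in your alternative argument, the step that rules out nonzero-weight cycles is circular as stated. You argue that a nonzero cycle ``cannot occur on any accepting path'' because the minimum-weight paths into and out of it already leave $\{-1,0,1\}$; but that reasoning presupposes the bound $d(n)\in\{-1,0,1\}$, which is exactly what is being proved. Whether a cycle lies on an accepting path is purely a matter of reachability and co-reachability and has nothing to do with weights. (The analogous-sounding step after Theorem~\ref{gerdemann} is legitimate only because non-negativity of all path weights had already been established independently by Bellman--Ford.) The repair is to trim the product automaton to states that are both reachable and co-reachable, check computationally that every simple cycle of the trimmed graph has weight $0$, and then run Bellman--Ford with both signs on the trimmed graph to bound the accepting-path weights; note also that your claim that the minimum weight to \emph{every} state is $\geq -1$ cannot even be tested on the untrimmed graph (the dead state carries weight $-1$ self-loops, i.e., negative cycles), and on the trimmed graph only the values at accepting states need lie in $[-1,1]$, since intermediate partial sums may stray further while remaining bounded. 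With those corrections your weight-tracking product construction does give automaticity, but it is considerably heavier than the paper's one-line application of the semigroup trick.
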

	
	\begin{proof}
		We use the following {\tt Walnut} code.
		\begin{verbatim}
			reg match1 {0,1} {0,1} "([0,0]|[1,0])*[1,1]([0,0]|[1,0])*":
			def countr "?msd_fib Ex,y $saka(n,x,y) & $match1(y,u)":
			def sr n "?msd_fib $countr(n,t)":
			def sr1 n "?msd_fib $countr(n+1,t)":
		\end{verbatim}
		This produces linear representations for
		$s_R(n)$ (rank $82$; minimizes to rank $21$)
		and $s_R(n+1)$ (rank $64$; minimizes to rank $17$).
		From this we can produce a linear representation
		for $d(n) = s_R(n+1)-s_R(n)$ of rank $38$, which minimizes
		to one of rank $17$.  We can then do the ``semigroup trick''
		\cite[\S 4.11]{Shallit:2023}
		to show that the resulting sequence is Fibonacci automatic.
		The automaton is depicted in Figure~\ref{srd}.
		\begin{figure}[htb]
			\begin{center}
				\includegraphics[width=6.3in]{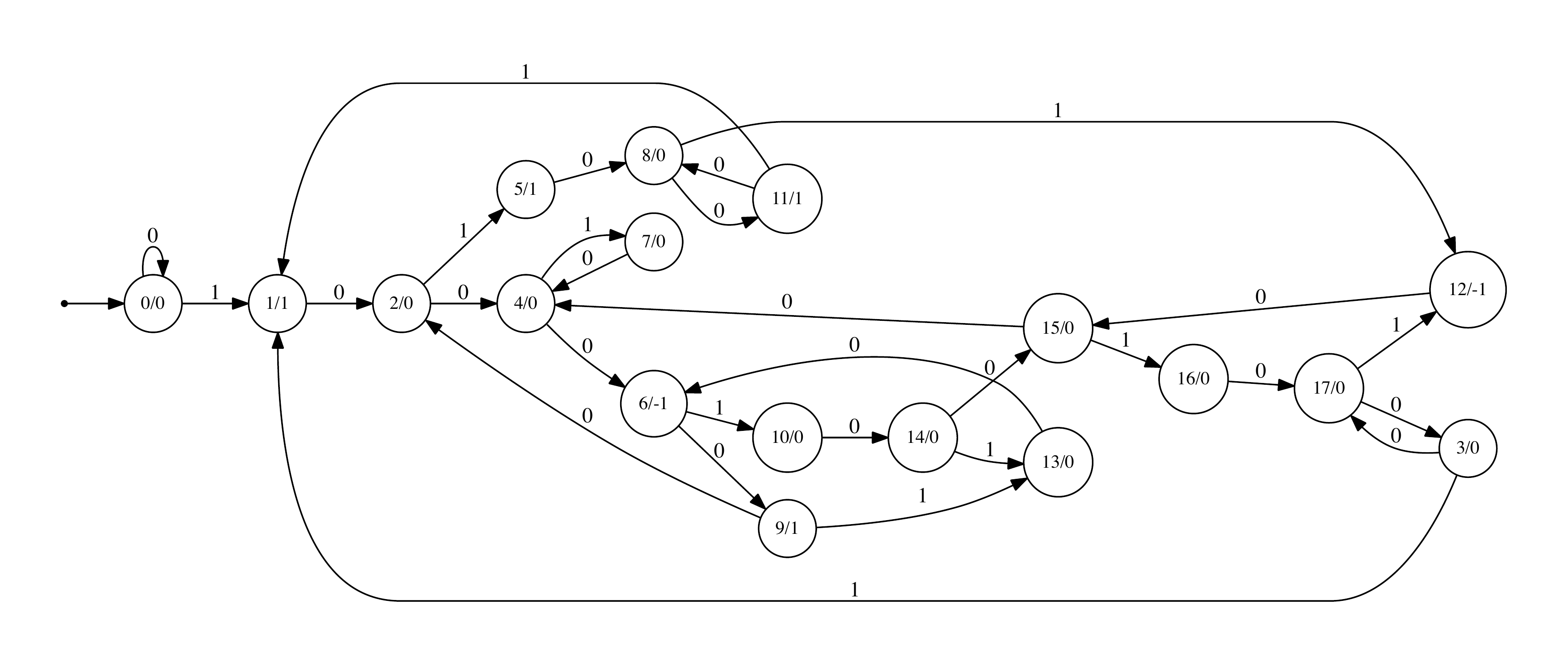}
			\end{center}
			\caption{Fibonacci automaton for $d(n)$.}
			\label{srd}
		\end{figure}
	\end{proof}

	\section{Length of base-\texorpdfstring{$\varphi$}{φ} representations}
	\label{fivep}
	
	Similarly, one can study the length $\ell(n)$
	of the left part of $(n)_\varphi$ (or, alternatively, one more than the
	exponent of the highest power of $\varphi$ appearing in $(n)_\varphi$).

	This is sequence \seqnum{A362692} in the OEIS.

	Here the idea is to create a simple automaton, {\tt onepos},
	that accepts binary strings
	$x$ and $t$ if and only if $t$ contains a single $1$ and it occurs
	at or to the right of the first $1$ in $x$.  Then we simply count the
	number of such $t$:
	\begin{verbatim}
		def intlength "?msd_fib Ex,y $saka(n,x,y) & $onepos(x,t)":
		def intl n "?msd_fib $intlength(n,t)":
	\end{verbatim}
	The resulting linear representation can be minimized to the
	following linear representation of rank $9$:
	\begin{align*}
		v^T &= \left[ \begin{smallarray}{ccccccccc}
			1\\
			0\\
			0\\
			0\\
			0\\
			0\\
			0\\
			0\\
			0
		\end{smallarray} \right] 
		&
		\gamma(0) & = \left[ \begin{smallarray}{ccccccccc}
			1& 0& 0& 0& 0& 0& 0& 0& 0\\
			0& 0& 1& 0& 0& 0& 0& 0& 0\\
			0& 0& 0& 1& 0& 0& 0& 0& 0\\
			0& 0& 0& 0& 0& 1& 0& 0& 0\\
			0& 0& 0& 0& 0& 0& 0& 1& 0\\
			0& 0& 0&-1& 0& 2& 0& 0& 0\\
			0& 0& 0&-1& 0& 2& 0& 0& 0\\
			0& 0& 0&-2& 1& 3&-1& 0& 0\\
			0& 0& 0&-2& 0& 3& 0& 0& 0
		\end{smallarray} \right]
		&
		\gamma(1) &= \left[\begin{smallarray}{ccccccccc} 
			0& 1& 0& 0& 0& 0& 0& 0& 0\\
			0& 0& 0& 0& 0& 0& 0& 0& 0\\
			0& 0& 0& 0& 1& 0& 0& 0& 0\\
			0& 0& 0& 0& 0& 0& 1& 0& 0\\
			0& 0& 0& 0& 0& 0& 0& 0& 0\\
			0& 0& 0& 0& 0& 0& 0& 0& 1\\
			0& 0& 0& 0& 0& 0& 0& 0& 0\\
			0& 0& 0& 0& 0& 0&-1& 0& 2\\
			0& 0& 0& 0& 0& 0& 0& 0& 0
		\end{smallarray}\right]
		&
		w &= \left[ \begin{smallarray}{c}
			0\\
			1\\
			2\\
			3\\
			3\\
			4\\
			4\\
			5\\
			5
		\end{smallarray} 
		\right] .
	\end{align*}
	
	\begin{table}[h]
		\begin{center}
			\begin{tabular}{c|ccccccccccccccccccccc}
				$n$ & 0 & 1 & 2 & 3 & 4 & 5 & 6 & 7 & 8 & 9 & 10 & 11 & 12 & 13 & 14 & 15 & 16 &17 & 18 & 19 \\
				\hline
				$\ell(n)$ & 0&1&2&3&3&4&4&5&5&5&5&5&6&6&6&6&6&6&7&7 
			\end{tabular}
		\end{center}
		\caption{First few values of $\ell(n)$.}
	\end{table}
	
	Inspection suggests that $\ell(n)$ is an increasing sequence and
	the gaps between successive terms are only $0$ and $1$.  
	
	We can prove
	this by finding the linear representation for $\ell(n)-\ell(n-1)$ and
	then using the ``semigroup trick''.
	This gives a Fibonacci DFAO, depicted
	in Figure~\ref{fig4}, computing
	$\ell(n) - \ell(n-1)$.
	\begin{figure}[htb]
		\begin{center}
			\includegraphics[width=6in]{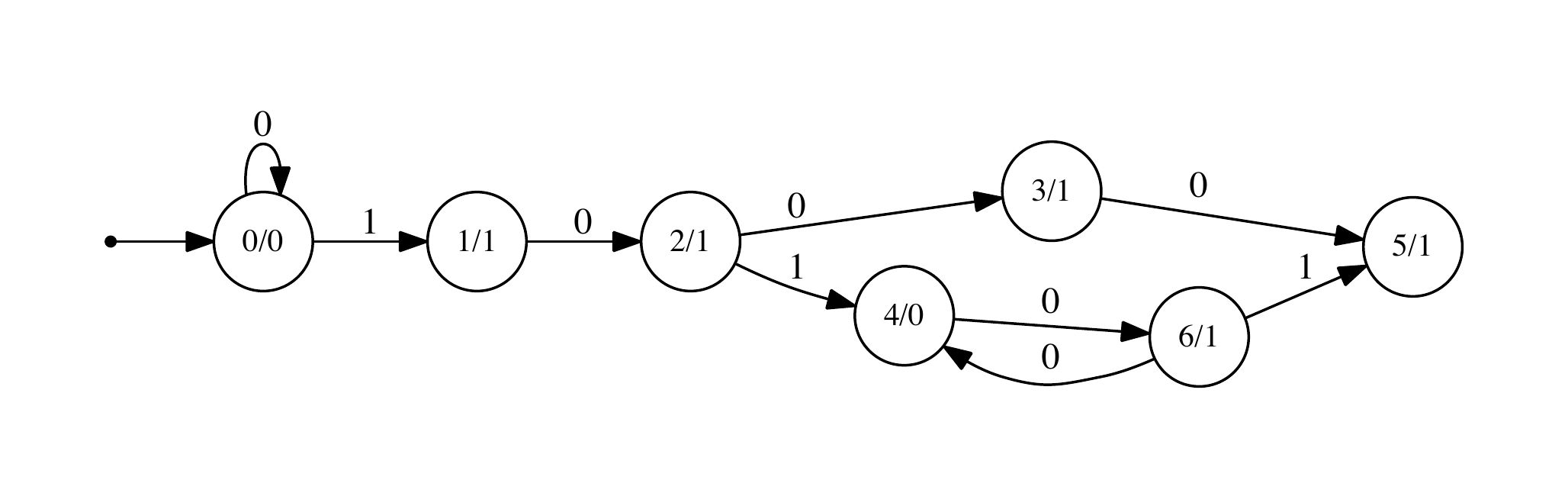}
		\end{center}
		\caption{Fibonacci DFAO for first difference of $\ell(n)$.}
		\label{fig4}
	\end{figure}
	
	Inspection of this automaton and observing
	that the Fibonacci representation of the Lucas numbers
	are of the form $1010*$, gives the following result:
	\begin{theorem}
		We have $\ell(n)-\ell(n-1) = 1$ if and only
		if $n=1$ or $n = L_{2i}$ or $n = L_{2i-1} + 1$ for $i \geq 1$.
	\end{theorem}
	This is implied by a result of Sanchis and Sanchis
	\cite[Thm.~2.1]{Sanchis&Sanchis:2001}.
	
	\section{Individual bits}
	\label{six}
	
	We can also prove theorems about individual bits
	of the $\varphi$-representation.
	Let 
	$$n = \sum_{L \leq i \leq R} d_i(n) \varphi^i$$ be the canonical 
	$\varphi$-representation
	of the natural number $n$ and let
	\[\alpha = (3-\varphi)/5 = (5-\sqrt{5})/10.\]
	The Sturmian sequence based on 
	$\alpha$ is defined to be $w(n) = \lfloor (n+1) \alpha \rfloor - 
	\lfloor n \alpha \rfloor$; this is sequence
	\seqnum{A221150} in the OEIS. For more about Sturmian sequences,
	see, e.g., \cite[Chap.~2]{Lothaire:2002}.
	
	The following result characterizes $d_0(n)$ and $d_1(n)$.
	Some previous papers have characterized these two sequences
	(see \cite{Hart&Sanchis:1999,Dekking:2020b}),
	but it appears that the specific characterization
	we present here is new.
	\begin{theorem}
		We have
		\begin{itemize}
			\item[(a)] $d_0(n+1) = w(n)$ for $n \geq 1$;
			\item[(b)] $d_1(n) = w(n+1)$ for $n \geq 0$.
		\end{itemize}
	\end{theorem}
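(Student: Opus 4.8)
The plan is to express both assertions as first-order statements in the Fibonacci (Zeckendorf) numeration system and let \texttt{Walnut} verify them, using the automaton \texttt{saka} for the digit sequences and a known automaton for the Sturmian word $w$. First I would recall that, by the construction in Section~\ref{two}, the automaton \texttt{saka} accepts $[a,b,c]$-labelled inputs exactly when $x.y^R = (n)_\varphi$, where the second component spells out the left part and the third spells out the folded right part. From \texttt{saka} I can therefore define, in \texttt{Walnut}, an automaton that on input $n$ outputs $d_0(n)$: namely, $d_0(n) = b$ where $b$ is the last bit of $x$ in the unique accepting triple $(n,x,y)$; this is a straightforward \texttt{def} using the already-built \texttt{lstbit1} (or its analogue) applied to the $x$-track. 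Similarly $d_1(n)$ is the second-to-last bit of $x$, definable via \texttt{lstbit2}. Each yields a Fibonacci DFAO.

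Next I would obtain a Fibonacci automaton for the Sturmian sequence $w(n) = \lfloor(n+1)\alpha\rfloor - \lfloor n\alpha\rfloor$ with $\alpha = (3-\varphi)/5$. Since $\alpha$ is a quadratic irrational in $\mathbb{Q}(\varphi)$, the sequence $w$ is Fibonacci-automatic; in \texttt{Walnut} one can build it either from the standard characterization of $\lfloor n\alpha\rfloor$ in Ostrowski/Zeckendorf terms or, more simply, by recognizing \seqnum{A221150} as (a shift/complement of) the Fibonacci word and constructing it with a short \texttt{reg} or \texttt{morphism} command. Once $w$ is available as a DFAO, parts (a) and (b) become the single \texttt{eval} statements
\begin{verbatim}
eval parta "?msd_fib An (n>=1) => D0[n+1] = W[n]":
eval partb "?msd_fib An D1[n] = W[n+1]":
\end{verbatim}
where \texttt{D0}, \texttt{D1}, \texttt{W} are the DFAOs just described, and \texttt{Walnut} should return \texttt{TRUE} for both.

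The main obstacle is not the logical reasoning, which \texttt{Walnut} dispatches automatically, but getting the right automaton for $w$ and matching up the off-by-one indexing conventions. In particular I must be careful about how \texttt{saka} handles leading zeros (so that $d_0$ and $d_1$ are read off the correct position of the folded right part versus the left part) and about the exact normalization of $\lfloor n\alpha\rfloor$ that \texttt{Walnut} uses; a sign error or a one-step shift in $\alpha$'s continued-fraction data would make the \texttt{eval} return \texttt{FALSE} even though the theorem is true. I would therefore first sanity-check \texttt{D0}, \texttt{D1}, and \texttt{W} against Table~\ref{tab1} and the first terms of \seqnum{A221150} for small $n$ before running the universal statements. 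Once the automata are confirmed on small values, the universally quantified checks settle the theorem outright, with no induction required.
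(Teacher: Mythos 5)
Your proposal follows essentially the same route as the paper: define $d_0$ and $d_1$ from \texttt{saka} via \texttt{lstbit1}/\texttt{lstbit2} on the left-part track, build a Fibonacci-synchronized automaton for the Sturmian word $w$, and discharge both identities with universally quantified \texttt{Walnut} checks, exactly as in the paper's \texttt{checka}/\texttt{checkb}. The only substantive difference is that the paper makes the construction of $w$ explicit---via $\lfloor\alpha n\rfloor=\lfloor(3n-1-\lfloor\varphi n\rfloor)/5\rfloor$ and the existing \texttt{phin} automaton---whereas your ``more simply'' fallback of treating \seqnum{A221150} as a mere shift or complement of the Fibonacci word would not work (its slope $1/(\varphi+2)$ has continued fraction $[0;3,1,1,\dots]$, not $[0;2,1,1,\dots]$), so you would need to carry out your first option, which is precisely what the paper does.
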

	
	\begin{proof}
		\leavevmode
		\begin{itemize}
			\item[(a)]  We can compute $w(n)$ with {\tt Walnut} using the
			existing code {\tt phin}, which computes $\lfloor \varphi n \rfloor$ (see
			\cite[p.~278]{Shallit:2023}).  The resulting automaton computes $w$ in a
			``synchronized'' fashion; that is,  the inputs are $n$ and $x$, both
			in Zeckendorf representation, and the automaton accepts
			if and only if $x = w(n)$.
			Now
			$$
			\lfloor \alpha n \rfloor  = \left\lfloor \frac{3-\varphi}{5} n \right\rfloor 
			= \left\lfloor \frac{\lfloor (3-\varphi)n \rfloor}{5} \right\rfloor 
			= \left\lfloor \frac{3n - 1 - \lfloor \varphi n \rfloor}{5} \right\rfloor 
			$$
			for $n \geq 1$.  It follows that a synchronized
			automaton for $w(n)$ can be computed by the
			following {\tt Walnut} code. 
			\begin{verbatim}
				reg shift {0,1} {0,1} "([0,0]|[0,1][1,1]*[1,0])*":
				def phin "?msd_fib (s=0 & n=0) | Ex $shift(n-1,x) & s=x+1":
				def alphan "?msd_fib Ex $phin(n,x) & z=(3*n-(x+1))/5":
				def w "?msd_fib Ez $alphan(n+1,z+1) & $alphan(n,z)":
			\end{verbatim}
			Similarly, we can construct a synchronized automaton for $d_0$ and
			verify claim (a) as follows:
			\begin{verbatim}
				def d0 "?msd_fib Ex,y $saka(n,x,y) & $lstbit1(x,1)":
				eval checka "?msd_fib An (n>=1) => ($w(n) <=> $d0(n+1))":
			\end{verbatim}
			and {\tt Walnut} returns {\tt TRUE}.
			\item[(b)] We use the following {\tt Walnut} code, to compute
			a synchronized automaton for $d_1$ and check the assertion:
			\begin{verbatim}
				def d1 "?msd_fib Ex,y $saka(n,x,y) & $lstbit2(x,1)":
				eval checkb "?msd_fib An ($w(n+1) <=> $d1(n))":
			\end{verbatim}
			and {\tt Walnut} returns {\tt TRUE}.
		\end{itemize}
	\end{proof}
	
	Now let us turn to $d_{-1} (n)$.   Of course, this is a Fibonacci-automatic
	sequence (and can be computed by a DFAO of $13$ states).  We can
	find a surprising connection for $d_{-1}(n)$ by consulting the OEIS,
	as follows.
	
	Instead of representations involving sums of Fibonacci numbers,
	we can consider sums of Lucas numbers $L_n$, defined
	previously.  The following result
	was proved by Brown \cite{Brown:1969}:
	every natural number can be written as a sum
	$\sum_{0 \leq i \leq t} e_i L_i$, where $e_i \in \{0,1\}$.
	We use the shorthand notation $[e_t \cdots e_1 e_0]_L$ for this sum.
	However, this representation is not always unique, even if
	we impose the natural condition that 
	\begin{equation}
		e_i e_{i+1} \not=1 \ \text{for all $i$}.
		\label{natcon}
	\end{equation}
	For example, we have $[1010]_L = L_3 + L_1 = 5 = L_2 + L_0 = [101]_L$.   
	Recently, Chu, Luo, and Miller \cite{Chu&Luo&Miller:2022}
	proved that if we impose 
	Condition~\eqref{natcon}, then there are at most
	two distinct Lucas representations for each $n$.
	The set of $n$ with two distinct
	Lucas representations forms sequence \seqnum{A342089} in the OEIS.
	
	\begin{theorem}
		For all $n \geq 0$ we have $d_{-1} (n) = 1$ if and only
		if $n$ has exactly two distinct Lucas representations.
	\end{theorem}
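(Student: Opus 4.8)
The plan is to turn both sides of the claimed equivalence into Fibonacci automata and let {\tt Walnut} check that they accept the same set of~$n$.

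The left-hand side is immediate. In any accepting triple of {\tt saka} we have $x.y^R=(n)_\varphi$, so $d_{-1}(n)$ is exactly the last bit of the folded string $y$. Thus a Fibonacci automaton for $\{\,n : d_{-1}(n)=1\,\}$ comes from
\begin{verbatim}
def dm1 "?msd_fib Ex,y $saka(n,x,y) & $lstbit1(y,1)":
\end{verbatim}

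For the right-hand side I first build an automaton {\tt lucnum} that, given a bit string $e=e_t\cdots e_1e_0$ and an integer $n$ in Zeckendorf form, accepts iff $[e]_L=n$. The ingredient is the identity $L_i=F_{i+1}+F_{i-1}$, valid for all $i\in\Zee$ with the usual convention $F_{-1}=1$, $F_0=0$ (e.g.\ from $L_i=\varphi^i+\psi^i$ together with $\varphi^i=F_i\varphi+F_{i-1}$). Splitting $\sum_i e_iL_i$ accordingly and using the effect of {\tt shiftr} on Zeckendorf values, after collecting the small-index terms one gets
$$[e]_L \;=\;\bigl[\,{\tt shiftr}(e)\,\bigr]_F\;+\;\bigl[\,{\tt shiftr}({\tt shiftr}({\tt shiftr}(e)))\,\bigr]_F\;+\;2e_0\;+\;e_2,$$
which, with the already-built blocks {\tt shiftr}, {\tt fibnorm}, {\tt lstbit1}, {\tt lstbit3}, becomes essentially
\begin{verbatim}
def lucnum "?msd_fib Er,s,u,p,q,b,c $shiftr(e,r) & $shiftr(r,s) & $shiftr(s,u)
  & $fibnorm(r,p) & $fibnorm(u,q) & $lstbit1(e,b) & $lstbit3(e,c) & n=p+q+b+b+c":
\end{verbatim}
(I would sanity-check this on small inputs; it should confirm, for instance, $[101]_L=[1010]_L=5$, matching the example in the text.) A \emph{canonical} Lucas representation is one with no block $11$ (Condition~\eqref{natcon}) and no leading zero; let {\tt lucvalid}$(e,n)$ conjoin {\tt lucnum}$(e,n)$ with these conditions, taking care that leading-zero padding does not inflate the count, so that each $n\ge0$ gets its correct finite number of canonical representations. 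Then ``$n$ has exactly two distinct canonical Lucas representations'' is expressed by
\begin{verbatim}
def twoluc "?msd_fib Ee1,e2 ~(e1=e2) & $lucvalid(e1,n) & $lucvalid(e2,n)
  & (Ae3 $lucvalid(e3,n) => (e3=e1|e3=e2))":
\end{verbatim}
(by the Chu--Luo--Miller bound of at most two representations the last clause is in fact redundant). Finally,
\begin{verbatim}
eval thm "?msd_fib An $dm1(n) <=> $twoluc(n)":
\end{verbatim}
should return {\tt TRUE}, proving the theorem.

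The only delicate point is {\tt lucnum}: one must get the number of {\tt shiftr}'s and the small-index corrections coming from $F_{-1}=1$ exactly right, and one must ensure that leading zeros in $e$ do not count as distinct Lucas representations. Everything after that is a routine {\tt Walnut} evaluation.
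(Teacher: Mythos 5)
Your proposal is correct and follows essentially the same route as the paper: the same synchronized automaton for $d_{-1}$ via {\tt saka} and {\tt lstbit1}, the same Lucas-evaluation automaton derived from $L_i=F_{i+1}+F_{i-1}$ (your formula $[e]_L=[{\tt shiftr}(e)]_F+[{\tt shiftr}^3(e)]_F+2e_0+e_2$ matches the paper's $[xabc]_L=[xab]_F+[x]_F+a+2c$), and a final {\tt Walnut} equivalence check. The only differences are cosmetic: the paper expresses ``two distinct representations'' simply as the existence of two unequal no-$11$ strings (leading-zero padding already identifies representations differing only in leading zeros, so your extra no-leading-zero and exactness clauses are unnecessary, the latter by the Chu--Luo--Miller bound as you note).
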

	
	\begin{proof}
		We start by constructing an automaton to evaluate a Lucas
		representation.  It takes a number $n$
		in Zeckendorf representation and a
		binary string $x$ as inputs and accepts 
		if $[x]_L = n$.
		
		To do so, we use the classic identity $L_n = F_{n+1} + F_{n-1}$ for 
		all $n$.  It follows that $[xabc]_L = [xab]_F + [x]_F + a+2c $ for 
		for binary strings $x$ and $a,b,c \in \{0,1\}$.  We can therefore
		compute it as follows:
		\begin{verbatim}
			def luc2fib "?msd_fib Eu,v,w,a,c,y,z $shiftr(x,u) & $shiftr(u,v) &
			$shiftr(v,w) & $lstbit3(x,a) & $lstbit1(x,c) & $fibnorm(u,y) &
			$fibnorm(w,z) & n=y+z+a+2*c":
		\end{verbatim}
		Next, we create an automaton to accept the Fibonacci representation
		of those $n$ with two different Lucas representations.
		\begin{verbatim}
			reg has11 {0,1} "(0|1)*11(0|1)*":
			reg equal {0,1} {0,1} "([0,0]|[1,1])*":
			def tworep "?msd_fib Ex,y (~$has11(x)) & (~$has11(y)) &
			(~$equal(x,y)) & $luc2fib(n,x) & $luc2fib(n,y)":
		\end{verbatim}
		Finally, we create a synchronized automaton
		for $d_{-1}(n)$ and check its equality
		with sequence \seqnum{A342089}:
		\begin{verbatim}
			def dm1 "?msd_fib Ex,y $saka(n,x,y) & $lstbit1(y,1)":
			eval checkseq "?msd_fib An ($dm1(n) <=> $tworep(n))":
		\end{verbatim}
		and {\tt Walnut} returns {\tt TRUE}.
	\end{proof}
	
	\begin{remark}
		Hart and Sanchis \cite{Hart&Sanchis:1999} proved that the limiting
		frequency of $1$'s in $(d_{-1} (n))_{n \geq 0}$, namely
		$\lim_{n \rightarrow \infty} {\frac{1}{n}} \sum_{0 \leq i < n} d_{-1} (n)$,
		is $\gamma := 1/(3\varphi+1) \doteq 0.17082$.   We can 
		easily prove this with {\tt Walnut}
		as follows:  first, define $d'(n) = \sum_{0 \leq i \leq n} d_{-1} (i)$.
		Then we can ``guess'' a synchronized automaton
		for $d'(n)$ using the method outlined in \cite[\S 10.15]{Shallit:2023}.
		Next, we verify that our guess is correct using the automaton
		{\tt dm1} computed above.   Finally, we show
		with {\tt Walnut} that $|d'(n) - w'(n)| \leq 1$, where
		$w'(n)$ is the Sturmian sequence corresponding to $\gamma$.
		We omit the details.
	\end{remark}
	
	We now turn to considering what Dekking and van Loon \cite{Dekking&van.Loon:2021}
	called ``vertical runs of $1$'s''.   Given $i \in \Zee$, we saw above
	that $d_i (n)$ is the coefficient of $\alpha^i$ in the canonical
	$\varphi$-representation of $n$.   A {\it vertical run of $1$'s\/} 
	takes place starting at position $n$ if
	$$ d_i(n-1) = 0, \quad d_i(n) = 1, \quad d_i(n+1)=1, \ldots, d_i(n+t-1) = 1, \quad d_i (n+t) = 0.$$
	and we say the length of the run is $t$.   (If $n = 0$ we assume $d_i(-1) = 0$
	for all $i$.)
	The name comes from thinking of the base-$\varphi$ representation of
	the numbers $d_i(n-1), \ldots, d_i(n+t)$ written vertically, with the
	$i$'th position lined up.  For example, for $i = 4$ and $n= 7$ we have
	a vertical run of length $5$, as illustrated in Table~\ref{tab7}.
	\begin{table}[htb]
		\begin{center}
			\begin{tabular}{c|l}
				$n$ & $(n)_\varphi$ \\
				\hline
				6 & $0{\bf0}1010.0001$ \\
				7 & $0{\bf 1}0000.0001$ \\
				8 & $0{\bf 1}10001.0001$ \\
				9 & $0{\bf 1}0010.0101$ \\
				10 & $0{\bf 1}0100.0101$ \\
				11 & $0{\bf 1}0101.0101$ \\
				12 & $1{\bf 0}0000.101001$
			\end{tabular}
		\end{center}
		\caption{A vertical run of $5$ $1$'s.}
		\label{tab7}
	\end{table}
	
	For each $i$ we can determine the possible lengths of vertical runs.
	\begin{theorem}
		For the canonical $\varphi$-representation
		the vertical runs of $1$'s have length $v(i)$, as follows:
		$$ v(i) = \begin{cases}
			L_{i-1} + (-1)^i, & \text{if $i \geq 1$}; \\
			1, & \text{if $i=0$}; \\
			L_{-i}, & \text{if $i< 0$}.
		\end{cases}
		$$
		\label{vertthm}
	\end{theorem}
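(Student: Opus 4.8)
The plan is to reduce Theorem~\ref{vertthm} to a handful of first‑order assertions that {\tt Walnut} can verify against the automaton {\tt saka}, exactly in the spirit of the rest of the paper. The crucial preliminary is a \emph{digit‑extraction} automaton: for the left part (positions $i \ge 0$) and for the right part (positions $i<0$) separately, build a synchronized automaton $D(n,i,b)$ — with $n$ and $|i|$ in Zeckendorf representation and $b\in\{0,1\}$ — accepting exactly when $d_i(n)=b$. For $i\ge 0$ this means reading off position $i$ (counted from the right) of the left‑part track $x$ of ${\tt saka}(n,x,y)$, which can be done by aligning that track with the Zeckendorf representation of $F_{i+2}$ (whose unique $1$ sits in position $i$) and tying that string to the parameter $i$ via the standard Fibonacci‑indexing machinery. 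For $i<0$ one does the symmetric thing on the reversed right‑part track $y$ with $i$ replaced by $-i$. (An alternative would be to look for a synchronized shift $g_i$ with $d_i(n)=d_0(g_i(n))$, but the natural candidate $\lfloor \varphi^{-i} n\rfloor$ is not injective, so direct extraction seems more robust.)

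With $D$ in hand, ``there is a vertical run of $1$'s of length $t$ at position $i$ beginning at $n$'' becomes the short formula: $\bigl(n=0 \text{ or } D(n-1,i,0)\bigr)$, and $D(n+t,i,0)$, and $\forall m\,(n\le m<n+t)\Rightarrow D(m,i,1)$, and $t\ge 1$; call the resulting automaton $\mathrm{Run}(i,n,t)$. Separately, since $v(i)$ takes only Lucas values and the Lucas numbers have the simple Zeckendorf shape already exploited in Section~\ref{fivep}, it is routine to build a synchronized automaton $V(i,t)$ accepting exactly when $t=v(i)$; the three cases $i\ge 1$, $i=0$, $i<0$, the parity term $(-1)^i$, and the $\pm 1$ correction are all easily expressed. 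Theorem~\ref{vertthm} is then precisely the assertion
\[
\forall i\ \forall t\ \bigl(\,\exists n\ \mathrm{Run}(i,n,t)\,\bigr)\ \Longleftrightarrow\ V(i,t),
\]
submitted to {\tt Walnut} in the two \texttt{?msd\_fib} instances corresponding to $i\ge 0$ and $i<0$ (with the degenerate point $i=0$ either folded into $V$ or checked directly). The left‑to‑right direction says every run length occurring at height $i$ equals $v(i)$; the right‑to‑left direction, together with $V$ being a (total) function of $i$, says such a run actually occurs and that all runs at height $i$ share the one length $v(i)$.

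I expect the main obstacle to be getting the digit‑extraction automaton $D(n,i,b)$ right: it must interact correctly with {\tt saka}'s leading‑zero padding conventions and with the ``folding'' of the right part, and $i$ enters as a genuine numerical parameter rather than a fixed constant, so the alignment with the representation of $F_{i+2}$ (or of $i$ itself) has to be set up carefully. Once $D$ is available the remaining formulas are mechanical; the only practical risk is that the intermediate automata — in particular the bounded universal quantifier $\forall m\,(n\le m<n+t)$ combined with run lengths $t$ that grow like $L_{i-1}$ — become large, but these are handled symbolically by {\tt Walnut} and do not affect correctness, and the required finiteness of $(n)_\varphi$ for every integer $n$ is already guaranteed by Bergman's theorem.
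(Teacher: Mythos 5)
There is a genuine gap, and it lies exactly where you flagged your ``main obstacle'': the digit position $i$ cannot be carried around as a number in Zeckendorf representation. The relations your plan needs --- a synchronized automaton $D(n,i,b)$ expressing $d_i(n)=b$ with $i$ read as a Zeckendorf numeral, and a synchronized automaton $V(i,t)$ expressing $t=L_{i-1}+(-1)^i$ --- do not exist. The reason is a growth-rate mismatch, not a matter of careful setup: the marker for position $i$ is essentially $F_{i+2}$ (Zeckendorf string $10^i$, length $i+1$), and the run length is $L_{i-1}\pm 1$, both exponential in $i$, whereas $(i)_F$ has length $\Theta(\log i)$. No automaton reading the inputs in parallel can relate a parameter to a quantity exponentially larger than it (the maps $i\mapsto F_{i+2}$ and $i\mapsto L_{i-1}$ are not Fibonacci-synchronized), so $D$ and $V$ cannot be built, and consequently the closing sentence $\forall i\,\forall t\,[(\exists n\,\mathrm{Run}(i,n,t))\Leftrightarrow V(i,t)]$ is not expressible in the logic \texttt{Walnut} decides. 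Your aside about aligning with ``the representation of $F_{i+2}$'' is the right instinct, but tying that string back to the numeric parameter $i$ is precisely the non-regular step.

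The paper's proof sidesteps this by never treating the position as a number: the position is encoded by a string $t$ with a single $1$ (the constraint \texttt{\$isfib(t)} together with \texttt{\$match1(x,t)}), so ``digit extraction'' is exactly your $D$ but with the position given in this marker form, and the run-length condition is then the same bounded-quantifier formula you wrote (with the run length as the numeric variable). The output is an automaton accepting pairs (run length in Zeckendorf, position marker), and --- this is the step your proposal tries to automate away but cannot --- the closed form $v(i)=L_{i-1}+(-1)^i$ for $i\ge 1$ is read off by inspecting the finitely many families of accepting paths of that automaton (seven cases), with $i<0$ handled analogously. If you want to salvage your fully ``push-button'' formulation, you would have to keep the marker encoding throughout and express the target value via a regular relation between the marker string $10^i$ and the Zeckendorf string of $L_{i-1}\pm 1$ (which do have comparable lengths), rather than via the numeral $i$; as written, both $D$ and $V$ are impossible objects.
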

	\begin{proof}
		We can create a {\tt Walnut} formula that determines the possible
		lengths of vertical runs for each $i \geq 0$, as follows:
		\begin{verbatim}
			def matchfib "?msd_fib Ex,y $saka(n,x,y) & $isfib(t) & $match1(x,t)":
			def verticalr "?msd_fib (i>0) & En (~$matchfib(n-1,t)) &
			(~$matchfib(n+i,t)) & Aj (j<i) => $matchfib(n+j,t)":
		\end{verbatim}
		The resulting automaton accepts the Zeckendorf representation of $v(i)$
		in parallel with a binary string with exactly one $1$, occurring
		$i$ bits from the right end.  It is displayed in Figure~\ref{vert}.
		\begin{figure}[htb]
			\begin{center}
				\includegraphics[width=6in]{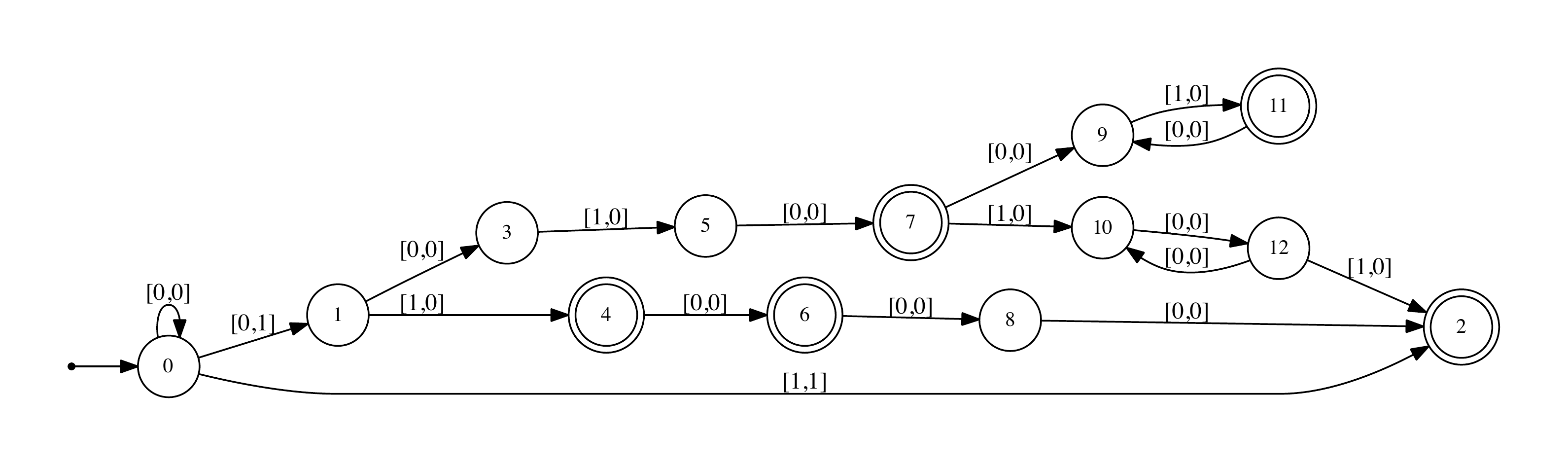}
			\end{center}
			\caption{Automaton for $v(i)$, $i> 0$.}
			\label{vert}
		\end{figure}
		
		As one can see by inspecting the automaton in Figure~\ref{vert}, the
		acceptance paths, ignoring leading $0$'s, are of the following types:
		\begin{itemize}
			\item[(a)] $[1,1]$ 
			\item[(b)] $[0,1][1,0]$
			\item[(c)] $[0,1][1,0][0,0]$
			\item[(d)] $[0,1][0,0][1,0][0,0]$
			\item[(e)] $[0,1][1,0][0,0][0,0][0,0]$
			\item[(f)] $[0,1][0,0][1,0][0,0]([0,0][1,0])^*$ 
			\item[(g)] $[0,1][0,0][1,0][0,0][1,0]([0,0][0,0])^*[0,0][1,0]$
		\end{itemize}
		These correspond to the following accepted inputs:
		\begin{itemize}
			\item[(a)] $i = 0$, $v(0) = 1$;
			\item[(b)] $i = 1$, $v(1) = 1$;
			\item[(c)] $i = 2$, $v(2) = 2$;
			\item[(d)] $i = 3$, $v(3) = 2$;
			\item[(e)] $i = 4$, $v(5) = 5$;
			\item[(f)] $i=2k+1$ odd, $v(i) = L_{i-1} - 1$;
			\item[(g)] $i = 2k$ even, $v(i) = L_{i-1} + 1$.
		\end{itemize}
		The case of $i<0$ can be handled similarly; we omit the details.
	\end{proof}
	
	\begin{remark}
		Dekking and Van Loon \cite{Dekking&van.Loon:2021}
		claimed that ``there is no such
		regularity'', but Theorem~\ref{vertthm} would seem to contradict that.
	\end{remark}

	\section{Palindromic and anti-palindromic \texorpdfstring{$\varphi$}{φ}-representations}
	\label{seven}
	
	Let us consider natural numbers with palindromic $\varphi$-representations,
	that is, where $n = [x.x^R]_\varphi$.   There are two variations:  one
	where we demand that the expansion be canonical, and one where
	we do not make this assumption.
	
	Let us start with the canonical case.   Since the representation that
	our automaton {\tt saka} uses is ``folded'', we can find the Zeckendorf
	representation of those $n$ with palindromic $\varphi$-representations
	with the following {\tt Walnut} code:
	\begin{verbatim}
		def palcanon "?msd_fib Ex,y $saka(n,x,y) & $equal(x,y)":
	\end{verbatim}
	\begin{figure}[H]
		\begin{center}
			\includegraphics[width=6.2in]{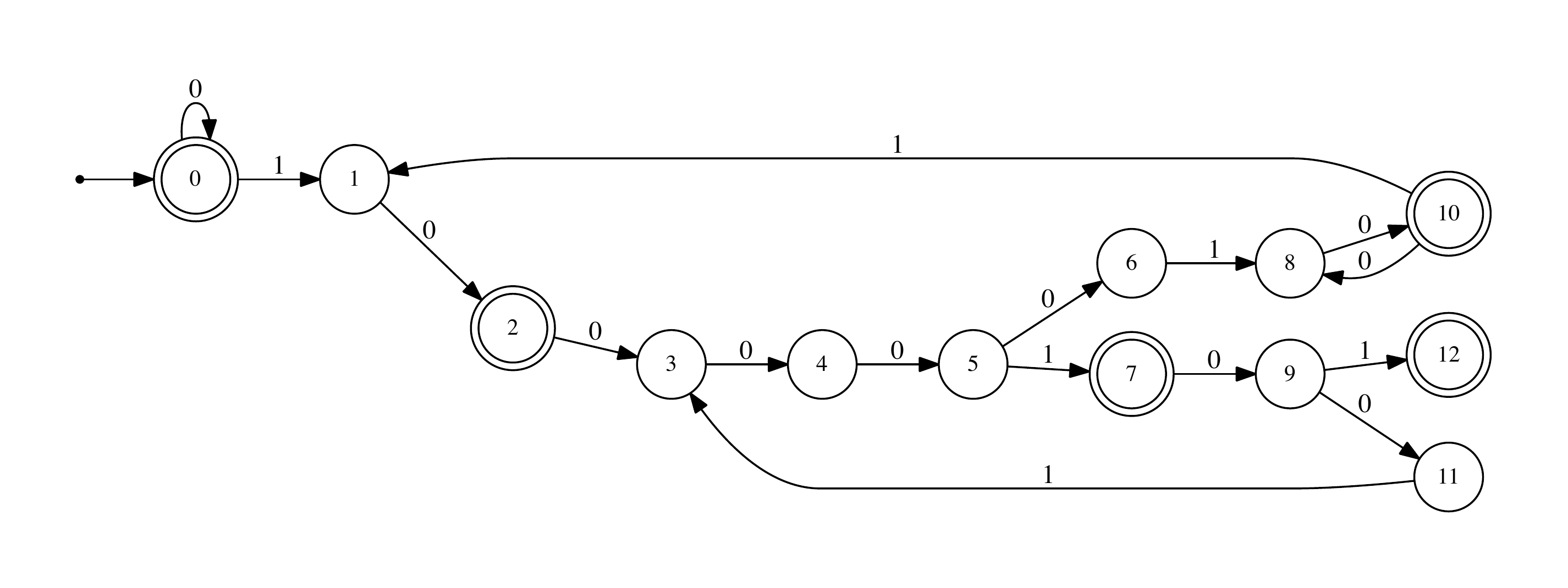}
		\end{center}
		\caption{Fibonacci automaton for those $n$ with palindromic $\varphi$-representations.}
		\label{palcanon}
	\end{figure}
	Hence we have shown the following
	\begin{theorem}
		The natural number 
		$n$ has a palindromic canonical $\varphi$-representation if and only
		if the $12$-state automaton depicted in Figure~\ref{palcanon}
		accepts $(n)_F$.
	\end{theorem}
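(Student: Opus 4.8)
The plan is to turn ``palindromic canonical $\varphi$-representation'' into a single first-order formula built on top of the automaton {\tt saka}, and then let {\tt Walnut} produce the recognizing automaton directly. Recall that {\tt saka} accepts the parallel triple $(n,x,y)$ exactly when $x\,.\,y^R$ is \emph{the} canonical $\varphi$-representation of $n$, where $y$ is the folded (i.e.\ reversed) encoding of the right part. A representation $x'\,.\,z$ is palindromic, by definition, precisely when $z = (x')^R$; written in the folded coordinates used by {\tt saka} this is simply the condition $x = y$, which the two-track regular language $([0,0]\,|\,[1,1])^*$ --- the automaton {\tt equal} introduced earlier --- recognizes.

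Concretely, I would run
\begin{verbatim}
def palcanon "?msd_fib Ex,y $saka(n,x,y) & $equal(x,y)":
\end{verbatim}
{\tt Walnut} forms the product of {\tt saka} with {\tt equal}, quantifies away $x$ and $y$, and then determinizes and minimizes, yielding an automaton over $(n)_F$; one reads off from the output that it has $12$ states, and it is the automaton in Figure~\ref{palcanon}. Since every step {\tt Walnut} performs is exact, the resulting DFA accepts $(n)_F$ if and only if there exist binary strings $x,y$ with $x\,.\,y^R$ the canonical $\varphi$-representation of $n$ and $x = y$, i.e.\ if and only if $n = [x\,.\,x^R]_\varphi$ for some $x$ in canonical form. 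That is exactly the statement of the theorem.

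The one point that needs genuine care --- and hence the part I expect to be the main obstacle --- is the bookkeeping of leading and trailing zeros together with the exact placement of the ``decimal point'', since {\tt saka} pads all three tracks with leading zeros to a common length and since ``palindromic'' here means $z=(x')^R$ for left part $x'$ and right part $z$, \emph{not} that the concatenated digit string is a palindrome (for instance $(3)_\varphi = 100.01$ concatenates to the string $10001$, which is a palindrome as a string, yet $3$ must not be accepted, because the left and right parts have different lengths). A leading zero on the $x$-track is a harmless leading zero of the left part, while the same zero on the folded $y$-track is a harmless trailing zero of the right part $y^R$; by the paper's convention neither changes the represented number nor the palindrome property, so ``$x=y$ on the padded strings'' is equivalent to ``the essential left part equals the reversal of the essential right part'' (if $x_0,y_0$ are the leading-zero-free versions, then $0^ax_0 = 0^by_0$ forces $a=b$ and $x_0=y_0$, since $x_0$ and $y_0$ each begin with $1$ unless $n=0$). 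A quick sanity check on small values --- $n=2=[10.01]_\varphi$ accepted, $n=1$ and $n=3$ rejected --- together with a check that no further {\tt fixleadzero} cleanup is needed then confirms that the minimal DFA {\tt Walnut} outputs is exactly the $12$-state automaton of Figure~\ref{palcanon}.
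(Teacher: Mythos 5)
Your proposal is correct and is essentially the paper's own proof: the paper establishes the theorem with exactly the same one-line {\tt Walnut} command {\tt def palcanon "?msd\_fib Ex,y \$saka(n,x,y) \& \$equal(x,y)":}, relying on the folded representation so that palindromicity becomes the condition $x=y$. Your extra discussion of zero-padding and the small sanity checks are sound but not part of (nor needed beyond) the paper's argument.
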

	The resulting sequence of $n$ accepted by this automaton
	is 
	$$2, 14, 36, 38, 94, 96, 246, 248, 260,\ldots$$ and forms sequence
	\seqnum{A362780} in the OEIS.
	
	Now we turn to the case of allowing non-canonical expansions. 
	Here there are additional examples such as
	$6 = [1001.1001]$, which is palindromic,
	non-canonical due to the presence of $11$.
	We can construct an automaton for these $n$ as follows:
	\begin{verbatim}
		def pal "?msd_fib Ex,y $frougny(n,x,y) & $equal(x,y)":
	\end{verbatim}
	This proves the following result:
	\begin{theorem}
		The natural number $n$ has some palindromic (possibly non-canonical)
		$\varphi$-representation if and only
		if the $16$-state automaton depicted in Figure~\ref{pal}
		accepts $(n)_F$.
		\begin{figure}[htb]
			\begin{center}
				\includegraphics[width=6.2in]{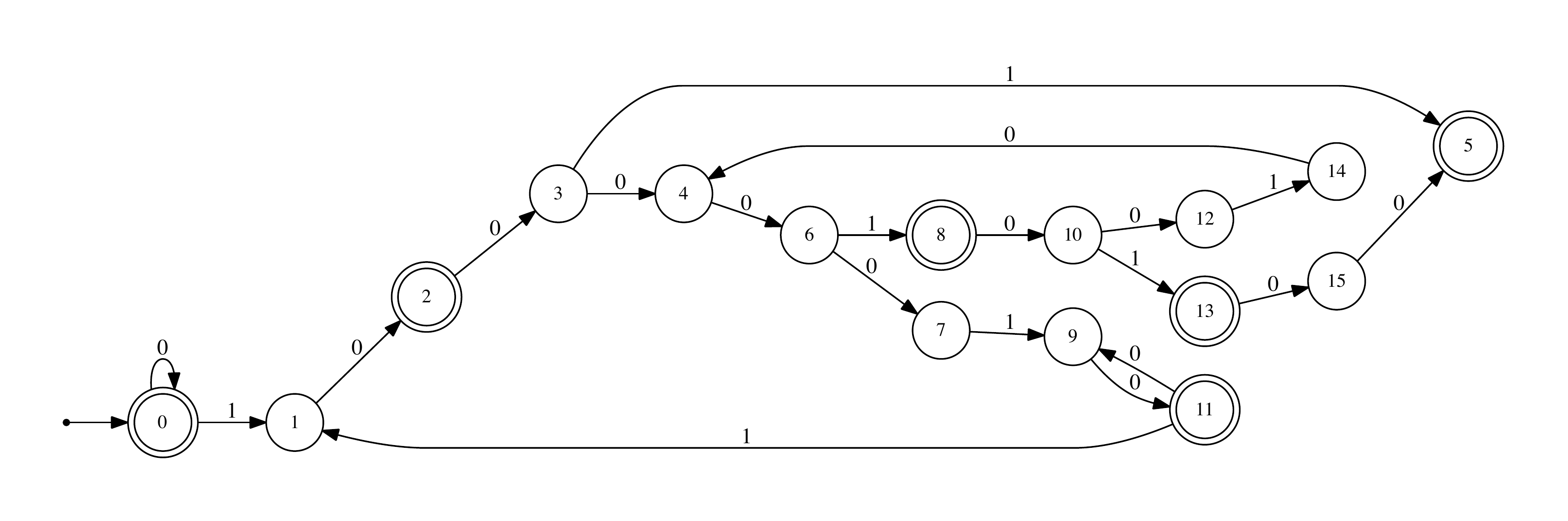}
			\end{center}
			\caption{Fibonacci automaton for those $n$ having some palindromic (possibly 
				non-canonical) $\varphi$-representations.}
			\label{pal}
		\end{figure}
	\end{theorem}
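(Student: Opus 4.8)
The plan is to proceed exactly as in the canonical case treated just above, but starting from the unrestricted automaton {\tt frougny} rather than {\tt saka}, so that non-canonical palindromes such as $6 = [1001.1001]_\varphi$ are also captured. Recall that {\tt frougny} accepts a triple $(n,x,y)$, read in parallel with the usual leading-zero padding, precisely when $x$ is the left part and $y$ is the reversed (\emph{folded}) right part of some $\varphi$-representation of $n$, with no condition whatsoever imposed on the binary strings $x$ and $y$. Under this folding convention, demanding that the $\varphi$-representation be a palindrome $n = [x . x^R]_\varphi$ is the same as demanding that the folded right part coincide with the left part, i.e.\ that $x = y$. Hence the first-order formula $\exists x\, \exists y\ {\tt frougny}(n,x,y) \,\wedge\, {\tt equal}(x,y)$, where {\tt equal} is the two-track automaton accepting $\{[0,0],[1,1]\}^*$, defines exactly the desired set of $n$; feeding it to {\tt Walnut} (the code {\tt def pal}) and projecting onto the first coordinate yields, after {\tt Walnut}'s automatic determinization and minimization, the $16$-state automaton of Figure~\ref{pal}, which by construction accepts $(n)_F$ if and only if $n$ has some (possibly non-canonical) palindromic $\varphi$-representation.

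The one genuinely non-mechanical point is to check that the folding-plus-padding encoding faithfully represents palindromes, so that the clause ${\tt equal}(x,y)$ is neither too strong nor too weak. Here one argues: any leading zeros that {\tt Walnut} prepends to the track $x$ are discarded under our convention on left parts, while the ``leading'' zeros of the folded track $y$ are precisely the trailing zeros of the true right part $y^R$ and are likewise discarded; so if the padded tracks satisfy $x = y$, then stripping their common leading zeros leaves equal strings $x' = y'$, whence the true right part is $x'^R$ and $x' . x'^R$ is a bona fide palindrome. Conversely, given $n = [x'.x'^R]_\varphi$ one pads both tracks to a common length to obtain an accepted triple with $x = y$. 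Combined with the correctness of {\tt frougny} established in Section~\ref{two}, this gives the stated equivalence.

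Everything else is purely computational and is carried out by {\tt Walnut}: forming {\tt pal}, determinizing and minimizing the projection onto $n$, and reading off the automaton. No induction is required, just as in the canonical case handled by {\tt palcanon}. I expect the only place demanding care --- and so the ``main obstacle'' --- to be the zero-padding bookkeeping described above; once that is settled, the theorem follows immediately from the software, exactly as its canonical counterpart did.
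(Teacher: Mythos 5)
Your proposal is correct and matches the paper's proof: the paper obtains the automaton with exactly the command {\tt def pal "?msd\_fib Ex,y \$frougny(n,x,y) \& \$equal(x,y)"}, i.e.\ the unrestricted {\tt frougny} automaton intersected with the equality condition on the folded tracks, and lets {\tt Walnut} do the projection and minimization. Your additional discussion of the zero-padding and folding convention is a reasonable elaboration of a point the paper leaves implicit, not a different method.
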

	The resulting sequence of $n$ accepted by this automaton
	is $$2, 6, 14, 36, 38, 94, 96, 100, 246, 248, 252, 260,\ldots$$
	and forms sequence
	\seqnum{A330672} in the OEIS.  We remark that the only new examples
	are those where the only $11$ occurs as the $1$ at the end of $x$
	(and hence at the beginning of $x^R$), as can be verified with the
	following {\tt Walnut} code:
	\begin{verbatim}
		def pal11 "?msd_fib Ex,y $frougny(n,x,y) & $equal(x,y) & $has11(x)":
	\end{verbatim}
	which accepts nothing.
	
	Next we turn to Shevelev's so-called ``$\varphi$-antipalindromic numbers";
	these are the $n$ for which the canonical base-$\varphi$ representation
	of $n$ is of the form $xa.x^R$, where $a \in \{0,1\}$.  (The name 
	comes from \seqnum{A178482} and is
	rather confusing, but we are keeping it.)
	\begin{verbatim}
		def shevanti "?msd_fib Ex,y $shiftr(x,y) & $saka(n,x,y)":
	\end{verbatim}
	\begin{theorem}
		Shevelev's $\varphi$-antipalindromic numbers $n$ are precisely those
		for which $(n)_F$ is accepted by the automaton in Figure~\ref{shev}.
		\begin{figure}[htb]
			\begin{center}
				\includegraphics[width=6.2in]{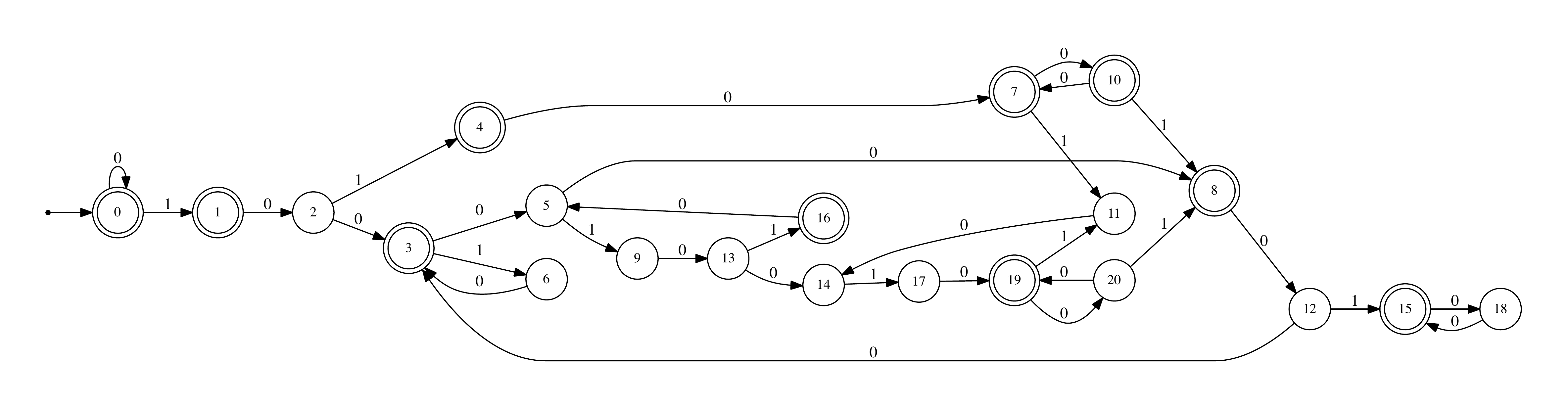}
			\end{center}
			\caption{Fibonacci automaton accepting Shevelev's $\varphi$-antipalindromic numbers.}
			\label{shev}
		\end{figure}
	\end{theorem}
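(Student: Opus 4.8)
The plan is to translate the defining property of Shevelev's $\varphi$-antipalindromic numbers directly into a first-order formula, exactly as the displayed {\tt shevanti} code does, and then argue that the automaton {\tt Walnut} returns is the one drawn in Figure~\ref{shev}.

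First I would unwind the definition: $n$ is $\varphi$-antipalindromic precisely when $(n)_\varphi = xa.x^R$ for some finite binary string $x$ and some bit $a \in \{0,1\}$. Recall (from the correctness of {\tt saka} established earlier) that {\tt saka}$(n,X,Y)$ holds exactly when $X.Y^R$ is the canonical $\varphi$-representation of $n$. Matching the two forms, the left track must satisfy $X = xa$ and the reversed right track must satisfy $Y = x$. Thus $Y$ is obtained from $X$ by deleting its least-significant digit, which is precisely the relation expressed by {\tt shiftr}$(X,Y)$. Consequently the formula ``$\exists x,y\ {\tt shiftr}(x,y)\ \wedge\ {\tt saka}(n,x,y)$'' accepts exactly the Zeckendorf representations of the $\varphi$-antipalindromic numbers, and {\tt Walnut} can evaluate it to produce a finite automaton; I would then observe that, after minimization, this automaton is the one in Figure~\ref{shev}, completing the proof with no induction or case analysis.

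The one place needing care is the interaction between the ``folded'' convention and {\tt Walnut}'s leading-zero padding. I would double-check that the ``extra'' digit $a$ in $xa.x^R$ really sits at the low-order end of the left track (the coefficient of $\varphi^0$), so that {\tt shiftr} — and not {\tt shiftl} — is the correct building block, and that padding all three tracks to a common length with leading zeros neither introduces nor destroys witnesses (including degenerate small cases such as $n=0,1$, where $x$ is empty). Given the previously verified behaviour of {\tt saka}, {\tt shiftr}, and the leading-zero conventions, this is routine bookkeeping, and it is the only subtlety I expect to encounter; everything else is mechanical execution of the {\tt Walnut} command.
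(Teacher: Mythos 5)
Your proposal matches the paper's proof exactly: the paper defines the automaton via the {\tt Walnut} command {\tt def shevanti "?msd\_fib Ex,y \$shiftr(x,y) \& \$saka(n,x,y)":}, which is precisely the formula you derive from unwinding $(n)_\varphi = xa.x^R$ against the folded convention of {\tt saka}, with {\tt shiftr} encoding the deletion of the low-order digit $a$. Your extra care about the folded representation and leading-zero padding is sound bookkeeping but does not change the argument.
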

	The resulting sequence of accepted $n$,
	$$ 0, 1, 3, 4, 7, 8, 10, 11, 18, 19, 21, 22, 25, 26, 28, 29, 47,\ldots$$
	forms sequence \seqnum{A178482} in the OEIS.
	
	Finally, we consider antipalindromic expansions, i.e.,
	expansions of the form $n = [x.\overline{x^R}]$, where the
	overline denotes a bitwise complement $0 \rightarrow 1, 1 \rightarrow 0$.
	Here we have to allow
	leading zeros in the left part and trailing zeros in the right part.
	\begin{theorem}
		There is a 193-state automaton that accepts precisely those $(n)_F$
		for which $n$ has a (possibly non-canonical) antipalindromic base-$\varphi$ 
		expansion.
	\end{theorem}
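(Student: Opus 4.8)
The plan is to reuse the automaton {\tt frougny} from Section~\ref{two}, which accepts a triple $(n,x,y)$ exactly when $n = [x \, . \, y^R]_\varphi$ with $x$ and $y$ arbitrary binary strings and \emph{no} $11$-freeness condition imposed. Because the theorem allows non-canonical expansions, {\tt frougny} is the correct building block; one cannot use {\tt saka} here, since the right part $\overline{x^R}$ of an antipalindromic expansion will typically contain $11$.

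The crux is translating the condition $n = [x \, . \, \overline{x^R}]_\varphi$ into the folded encoding used by {\tt frougny}. Write $x = a_{r-1} \cdots a_0$, where we must allow $a_{r-1}, a_{r-2}, \ldots$ to be $0$ --- that is, leading zeros of the left part are genuinely part of the data here, which is exactly why the theorem statement insists on allowing them. Then the right part is $\overline{x^R} = \overline{a_0}\,\overline{a_1} \cdots \overline{a_{r-1}}$, so in the folded form the string $y$, being the reversal of the right part, equals $\overline{x}$ as a string of length $r$. When {\tt Walnut} pads all three inputs with leading zeros to a common length $t \ge r$, the padded positions contribute the pair $[0,0]$ to both $x$ and $y$, while at every position inside the length-$r$ block the pair $(a_i, \overline{a_i})$ lies in $\{[0,1],[1,0]\}$ --- a $[0,0]$ can never occur there, since $y$ carries the bitwise complement of $x$. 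Hence the stream of symbol-pairs scanned by a two-track automaton reading $x$ and $y$ in parallel is precisely of the form $[0,0]^*\,([0,1]\,|\,[1,0])^*$, and conversely every pair of strings matching this pattern arises, with $r$ the length of the complementary suffix, from a legitimate choice of $x$ and satisfies $n = [x \, . \, y^R]_\varphi = [x \, . \, \overline{x^R}]_\varphi$.

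Concretely I would run
\begin{verbatim}
reg antipal {0,1} {0,1} "([0,0])*([0,1]|[1,0])*":
def antip "?msd_fib Ex,y $frougny(n,x,y) & $antipal(x,y)":
fixleadzero antipalrep antip:
\end{verbatim}
The automaton {\tt antipalrep} then accepts $(n)_F$ if and only if some antipalindromic (possibly non-canonical) base-$\varphi$ expansion of $n$ exists, and {\tt Walnut} reports that it has $193$ states.

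The main obstacle is conceptual rather than computational: one has to get the bookkeeping between bitwise complementation and {\tt Walnut}'s automatic leading-zero padding exactly right. Complementing a leading $0$ of $x$ would manufacture a spurious leading $1$ in $y$, so the relation $y = \overline{x}$ holds only on the block of the chosen length $r$, and the true leading-zero padding must be recognized by the separate $[0,0]^*$ factor of the regular expression. Once {\tt antipal} is pinned down to respect this, there is nothing left to verify by hand; the precise state count $193$ is simply whatever {\tt Walnut} returns after minimization and {\tt fixleadzero}.
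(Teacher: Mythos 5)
Your proposal is correct and takes essentially the same route as the paper: the paper likewise intersects {\tt frougny} with the two-track regular expression \texttt{[0,0]*([0,1]|[1,0])*} (there called {\tt compl}) to enforce $y=\overline{x}$ up to leading zero-padding, exactly as you argue. The only cosmetic difference is that the paper's {\tt def antip} leaves $x,y$ as free variables rather than existentially quantifying them and applying {\tt fixleadzero}, which does not change the substance of the construction.
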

	\begin{proof}
		We use the following {\tt Walnut} code to produce the automaton:
		\begin{verbatim}
			reg compl {0,1} {0,1} "[0,0]*([0,1]|[1,0])*":
			def antip "?msd_fib $frougny(n,x,y) & $compl(x,y)":
		\end{verbatim}
	\end{proof}
	The first few terms of this sequence are
	$$ 1,3,4,5,6,8,11,13,14,15,16,21,23,29,31,33,35,37,39,41,43,45,\ldots . $$
	This is apparently a new sequence, never studied before, and is
	sequence~\seqnum{A362781} in the OEIS.
	
	Table~\ref{tab4} gives
	the first few antipalindromic expansions for these numbers.
	\begin{table}[htb]
		\begin{center}
			\begin{tabular}{c|l}
				$n$ & $[x.y]$ \\
				\hline
				1 & 1.0 \\
				3  &0011.0011\\
				4  &101.010\\
				5  &0110.1001\\
				6  &001001.011011\\
				8  &001100.110011\\
				11  &10101.01010\\
				13  &00100001.01111011\\
				14  &011011.001001\\
				15  &00100100.11011011
			\end{tabular}
		\end{center}
		\label{tab4}
		\caption{Antipalindromic expansions.}
	\end{table}
	
	\section{Canonical expansions with fixed number of \texorpdfstring{$1$}{1}'s}
	\label{eight}
	
	As further examples of what can be done, let us consider those
	$n$ for which $(n)_\varphi$ contains a given fixed number $t$ of $1$'s,
	for $2 \leq t \leq 5$.   We use the following code:
	\begin{verbatim}
		reg haszero1 {0,1}  "0*":
		reg hasone1 {0,1}   "0*10*":
		reg hastwo1 {0,1}   "0*10*10*":
		reg hasthree1 {0,1} "0*10*10*10*":
		reg hasfour1 {0,1}  "0*10*10*10*10*":
		reg hasfive1 {0,1}  "0*10*10*10*10*10*":
		def canon2 "?msd_fib Ex,y $saka(n,x,y) & (($haszero1(x)&$hastwo1(y))|
		($hasone1(x)&$hasone1(y))|($hastwo1(x)&$haszero1(y)))":
		def canon3 "?msd_fib Ex,y $saka(n,x,y) & (($haszero1(x)&$hasthree1(y))|
		($hasone1(x)&$hastwo1(y))|($hastwo1(x)&$hasone1(y))|
		($hasthree1(x)&$haszero1(y)))":
		def canon4 "?msd_fib Ex,y $saka(n,x,y) & (($haszero1(x)&$hasfour1(y))|
		($hasone1(x)&$hasthree1(y))|($hastwo1(x)&$hastwo1(y))|
		($hasthree1(x)&$hasone1(y))|($hasfour1(x)&$haszero1(y)))":
		def canon5 "?msd_fib Ex,y $saka(n,x,y) & (($haszero1(x)&$hasfive1(y))|
		($hasone1(x)&$hasfour1(y))|($hastwo1(x)&$hasthree1(y))|
		($hasthree1(x)&$hastwo1(y))|($hasfour1(x)&$hasone1(y))|
		($hasfive1(x)&$haszero1(y)))":
	\end{verbatim}
	We summarize our results below.
	\begin{theorem}
		For all $t \geq 0$ there is an automaton recognizing those
		$(n)_F$ such that $(n)_\varphi$ contains exactly $t$ $1$'s.
		For $2 \leq t \leq 5$ the number of states is given in Table~\ref{tab8}.
		\begin{table}[htb]
			\begin{center}
				\begin{tabular}{c|c|c}
					&  & sequence in OEIS  \\
					$t$ & number of states & enumerating $(n)_\varphi$ \\
					&                  & with $t$ $1$'s \\
					\hline 
					2 & 6 & \seqnum{A005248} \\
					3 & 9 & \seqnum{A104626} \\
					4 & 24 & \seqnum{A104627} \\
					5 & 46 & \seqnum{A104628}
				\end{tabular}
			\end{center}
			\caption{Number of states for automaton recognizing $(n)_F$ such that $(n)_\varphi$ contains exactly $t$ $1$'s.}
			\label{tab8}
		\end{table}
	\end{theorem}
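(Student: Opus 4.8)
The plan is to reduce everything to closure properties of regular languages layered on top of the automaton \texttt{saka} of Section~\ref{two}. Recall that \texttt{saka} accepts the triple of (zero-padded, folded) strings $n, x, y$ precisely when $x.y^R$ is the canonical $\varphi$-representation of $n$; consequently the number of $1$'s appearing in $(n)_\varphi$ is exactly $|x|_1 + |y|_1$. So ``$(n)_\varphi$ has exactly $t$ ones'' is equivalent to the disjunction, over $j = 0, 1, \ldots, t$, of ``$|x|_1 = j$ and $|y|_1 = t-j$''. For each fixed $k$, the language of binary strings with exactly $k$ ones is the regular language $0^*(1\,0^*)^k$, denoted above by \texttt{haszero1}, \texttt{hasone1}, \ldots, \texttt{hasfive1}. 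The desired set of $n$ is therefore the projection onto the first coordinate of the intersection of \texttt{saka} with this finite disjunction of regular conditions.

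The first step, then, is the existence claim: because \texttt{saka} is a finite automaton and each ``exactly $k$ ones'' condition is regular, the intersection, the finite union over $j$, the projection, and the subsequent determinization are all effective operations that keep us inside the class of finite automata. Hence for every $t \ge 0$ there is an automaton recognizing $\{ (n)_F \suchthat |(n)_\varphi|_1 = t \}$, and in fact \texttt{Walnut} computes it from a formula of the same shape as \texttt{canon2}--\texttt{canon5}, with the evident extension of the code pattern to arbitrary $t$. The second step is purely computational: for $t = 2,3,4,5$ we run \texttt{canon2}, \texttt{canon3}, \texttt{canon4}, \texttt{canon5}, minimize, and read off $6$, $9$, $24$, $46$ states, respectively; comparing enough initial values against the OEIS then identifies the enumerating sequences \seqnum{A005248}, \seqnum{A104626}, \seqnum{A104627}, \seqnum{A104628}.

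I do not expect a genuine obstacle here; the one point requiring a little care --- and the thing I would check first --- is the interaction with leading and trailing zeros. Since \texttt{saka} pads its three tracks to a common length, the left part $x$ may acquire spurious leading $0$'s, and, because the right part is folded, $y$ may acquire spurious trailing $0$'s. But the regular expression $0^*(1\,0^*)^k$ for ``exactly $k$ ones'' is invariant under prepending or appending zeros, so the count $|x|_1 + |y|_1$ is unaffected and no normalization step is needed. Verifying this, together with matching the first dozen or so terms of each sequence to its OEIS entry, completes the proof.
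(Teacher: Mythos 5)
Your proposal is correct and matches the paper's proof: the paper proves exactly this theorem by intersecting \texttt{saka} with the same disjunction of ``exactly $j$ ones in $x$ and $t-j$ ones in $y$'' regular conditions (the \texttt{canon2}--\texttt{canon5} definitions), relying on closure under intersection, union, projection, and determinization, and then reading off the state counts computationally. Your remark that the ``exactly $k$ ones'' languages are invariant under zero-padding correctly disposes of the only subtle point.
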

	
	\section{Knott expansions}
	\label{three}
	
	Recently Dekking and Van Loon \cite{Dekking&van.Loon:2023}
	studied Knott representations of the positive integers, which
	are those $\varphi$-representation of $n$ that do not end
	in $011$ (followed, perhaps, by an arbitrary number of $0$'s).
	For example, $1.11$ and $10.01$ are Knott representations
	of $2$, but $1.1011$ and $10.0011$ are not Knott.
	They gave a method to compute $\totk(n)$, 
	the number of Knott representations
	of $n$ in terms of the Zeckendorf representation of $n$,
	but finding it and expressing it
	is rather complicated.   Here we show that,
	starting with the first Frougny-Sakarovitch automaton, one can
	easily find, with {\tt Walnut},
	a linear representation for $\totk(n)$ that permits
	efficient computation.
	All we have to do is find an automaton to check the Knott condition and
	ask {\tt Walnut} to compute the appropriate matrices,
	as follows:
	\begin{verbatim}
		def dekking "?msd_fib $frougny(n,x,y) & $knott(x,y)":
		def dek n "?msd_fib $dekking(n,x,y)":
	\end{verbatim}
	These give a linear representation
	$(v,\gamma,w)$ for $\totk(n)$ of rank $122$.
	This is sequence \seqnum{A289749} in the OEIS.
	
	This linear representation gives us a $O(\log n)$ method
	to compute $\totk(n)$, and much more.  It also allows
	us to compute closed forms for various kinds of $n$.
	For example, we can easily recover the following two results
	of \cite{Dekking&van.Loon:2023}:
	\begin{theorem}
		We have
		\begin{itemize}
			\item[(a)] $\totk(F_k) = F_k$ for $k \geq 1$;
			\item[(b)] $\totk(L_k) =
			\begin{cases}
				k,& \text{if $k$ odd}; \\
				k+1,& \text{if $k$ even}.
			\end{cases}$
		\end{itemize}
		\label{totk}
	\end{theorem}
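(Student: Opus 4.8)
The plan is to exploit the rank-$122$ linear representation $(v,\gamma,w)$ for $\totk(n)$ produced above, together with the explicit shape of the Zeckendorf representations of the Fibonacci and Lucas numbers: $(F_k)_F = 1\,0^{k-2}$ for $k\ge 2$, and $(L_k)_F = 1\,0\,1\,0^{k-3}$ for $k\ge 3$ (the latter being the ``$1010^*$'' shape noted earlier). Feeding these words into the linear representation turns each assertion into an identity about powers of the single matrix $\gamma(0)$:
\[
\totk(F_k) \;=\; v\,\gamma(1)\,\gamma(0)^{k-2}\,w \quad (k\ge 2),\qquad
\totk(L_k) \;=\; v\,\gamma(1)\,\gamma(0)\,\gamma(1)\,\gamma(0)^{k-3}\,w \quad (k\ge 3).
\]

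For part~(a), put $g(m)=v\,\gamma(1)\,\gamma(0)^{m}\,w$, so that $\totk(F_k)=g(k-2)$. Because $g(m)$ has the form (row vector)$\,\cdot\,\gamma(0)^{m}\,\cdot\,$(column vector) with $\gamma(0)$ fixed, the sequence $\bigl(g(m)\bigr)_{m\ge 0}$ is $C$-finite, satisfying a linear recurrence with constant coefficients whose characteristic polynomial divides $\det(xI-\gamma(0))$. One reads this recurrence off directly from the explicit matrices — it is convenient to minimize the linear representation first, which collapses the rank and makes the bookkeeping transparent — and it is simply the Fibonacci recurrence $g(m)=g(m-1)+g(m-2)$. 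Since the linear representation itself gives $g(0)=\totk(1)=1=F_2$ and $g(1)=\totk(2)=2=F_3$, induction yields $g(m)=F_{m+2}$, i.e.\ $\totk(F_k)=F_k$ for $k\ge 2$; the case $k=1$ is the trivial $\totk(1)=1$.

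For part~(b), put $h(m)=v\,\gamma(1)\gamma(0)\gamma(1)\,\gamma(0)^{m}\,w$, so that $\totk(L_k)=h(k-3)$ for $k\ge 3$. Again $\bigl(h(m)\bigr)$ is $C$-finite; the recurrence one obtains from the matrices is $h(m)=h(m-1)+h(m-2)-h(m-3)$, with characteristic polynomial $(x-1)^2(x+1)$, so $h(m)=am+b+c(-1)^m$ for suitable constants $a,b,c$. Matching the three values $h(0)=\totk(L_3)=3$, $h(1)=\totk(L_4)=5$, $h(2)=\totk(L_5)=5$ fixes $a,b,c$ and reproduces exactly the claimed closed form for all $k\ge 3$. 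The two remaining small cases reduce to part~(a): $L_2=3=F_4$ and $L_1=1=F_2$, so $\totk(L_2)=F_4=3$ and $\totk(L_1)=1$, in agreement with the formula (which is meant for $k\ge 1$).

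The one genuine step is the passage from the large explicit linear representation to the short scalar recurrences for $g$ and $h$, and this is where care is needed; but it is purely mechanical, either by minimizing the representation and inspecting the resulting small matrices, or by invoking the standard theory of $C$-finite sequences (the recurrence of a sequence $m\mapsto v'\gamma(0)^{m}w'$ is governed by the minimal polynomial of $\gamma(0)$ on the cyclic subspace generated by $w'$). Everything else is a handful of evaluations of $\totk$ at small arguments, each computable in $O(\log n)$ time from the linear representation. Alternatively, part~(a) can be done without recurrences: form a linear representation for $\totk(n)-n$ and use a weighted-automaton zeroness test to verify it vanishes on the language $0^*10^*$ of Zeckendorf representations of Fibonacci numbers.
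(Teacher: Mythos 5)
Your proposal is correct and follows essentially the same route as the paper: both exploit the {\tt Walnut}-produced linear representation together with $(F_k)_F = 10^{k-2}$ and $(L_k)_F = 1010^{k-3}$ to reduce $\totk(F_k)$ and $\totk(L_k)$ to expressions of the form (row vector)$\cdot\gamma(0)^m\cdot$(column vector), and then finish by linear-recurrence (C-finite) theory plus checking initial values. The only cosmetic difference is the last step: the paper computes the full minimal polynomial of $\gamma(0)$ and solves for the coefficients of all root powers by linear algebra, while you extract shorter annihilating recurrences for the two specific subsequences and match two or three initial terms --- both are the same mechanical computation in slightly different clothing.
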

	
	\begin{proof}
		The basic idea has already been explored in detail
		in a number of works; see \cite{Shallit:2023} for example.
		The basic idea is that if the Zeckendorf representation
		of $n$ looks like $r s^k t$ for strings $r, s, t$, then
		the linear representation for $f$ evaluates to
		$v \gamma(r) \gamma(s)^k \gamma(t) w$, and therefore is
		dependent on the entries of the $k$th power
		of the matrix $\gamma(s)$.   This, in turn, is governed
		by the zeros of the minimal polynomial of $\gamma(s)$.
		
		For our Theorem, we have $(F_k)_F = 10^{k-2}$ and
		$(L_k)_F = 1010^{k-3}$.  Therefore, the subsequences
		$\totk(F_k)$ and $\totk(L_k)$ can be expressed as a
		linear combination of the powers of the zeros of the
		minimal polynomial of $\gamma(0)$.
		
		We can ask {\tt Maple} (or any symbolic algebra system) to
		compute the minimal polynomial of $\gamma(0)$; it is
		$$X^3(X^2+1)(X^4+3X^2+1)(X^2+X-1)(X^2-X-1)(X-1)^3(X+1)^3.$$
		The zeros other than $0, 1, -1$ are therefore
		$$ -\varphi i,\ -(1/\varphi)i,\ (1/\varphi)i,\ \varphi i,\ 
		-\varphi,\ 1/\varphi,\ -1/\varphi,\ \varphi .$$
		It follows that $\totk(F_k)$ for $k \geq 3$ is a linear combination
		of the $k$'th powers of these zeros, together with
		$k^2,k,1,k^2(-1)^k,k(-1)^k,(-1)^k$.   We can then solve for the
		coefficients with linear algebra and hence prove both of the desired results.
	\end{proof}
	
	Furthermore, we can (almost trivially) obtain new results at will.
	As an example, consider the following result:
	\begin{proposition}
		We have $\totk(3F_n) = F_{n+2} - F_{n-4}$ for $n \geq 4$.
	\end{proposition}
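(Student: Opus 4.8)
The plan is to follow exactly the strategy used in the proof of Theorem~\ref{totk}: express $\totk(3F_n)$ through the rank-$122$ linear representation $(v,\gamma,w)$ for $\totk$ constructed above, identify the Zeckendorf representation of $3F_n$, and then read the closed form off the minimal polynomial of $\gamma(0)$.

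First I would pin down $(3F_n)_F$. From $F_{n+2}=F_{n+1}+F_n$ and $F_{n-2}=F_n-F_{n-1}$ one gets $F_{n+2}+F_{n-2}=2F_n+(F_{n+1}-F_{n-1})=3F_n$. Since the indices $n+2$ and $n-2$ differ by $4$, for $n\ge 4$ the string with a single $1$ in the position for $F_{n+2}$, a single $1$ in the position for $F_{n-2}$, and $0$'s elsewhere has no two consecutive $1$'s, so it is already canonical; that is, $(3F_n)_F = 10001\,0^{\,n-4}$ for $n\ge 4$. (For $n=4$ this reads $10001$, i.e.\ $9=F_6+F_2$, which is correct.) Consequently, reading inputs most-significant-digit first,
\[
\totk(3F_n) \;=\; v\,\gamma(1)\gamma(0)^3\gamma(1)\,\gamma(0)^{\,n-4}\,w
\qquad (n\ge 4).
\]

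Next I would invoke the minimal polynomial of $\gamma(0)$, already computed in the proof of Theorem~\ref{totk}, namely $X^3(X^2+1)(X^4+3X^2+1)(X^2+X-1)(X^2-X-1)(X-1)^3(X+1)^3$, of degree $19$. Writing $u^{\mathsf T}=v\,\gamma(1)\gamma(0)^3\gamma(1)$, the sequence $m\mapsto u^{\mathsf T}\gamma(0)^m w$, and hence $n\mapsto\totk(3F_n)$, satisfies the linear recurrence whose characteristic polynomial is this degree-$19$ polynomial. On the other hand, by Binet's formula $F_{n+2}-F_{n-4}$ is a rational linear combination of $\varphi^n$ and $(-1/\varphi)^n$, and since $X^2-X-1$ (with roots $\varphi$ and $-1/\varphi$) divides that same polynomial, the right-hand side satisfies the same recurrence as well.

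Finally, two sequences obeying one and the same monic linear recurrence of order $19$ agree for all $n\ge 4$ as soon as they agree at $n=4,5,\dots,22$. I would check these $19$ base cases directly --- evaluating $\totk(3F_n)=v\,\gamma((3F_n)_F)\,w$ for those $n$, or equivalently running the {\tt Walnut} automaton {\tt dek} on the corresponding inputs --- and compare with $F_{n+2}-F_{n-4}$; all agree, which proves the proposition. Alternatively, exactly as in Theorem~\ref{totk}, one may instead solve a small linear system for the coefficients of $\totk(3F_n)$ in the basis furnished by the zeros of the minimal polynomial (together with the polynomial-times-power terms coming from the repeated factors), and observe that only the $\varphi^n$ and $(-1/\varphi)^n$ contributions survive, with coefficients reproducing $F_{n+2}-F_{n-4}$. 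There is no real obstacle beyond bookkeeping here; the one point needing care is confirming that the displayed matrix-power formula for $\totk(3F_n)$ is valid precisely in the claimed range $n\ge 4$ (so that the Zeckendorf string really is $10001\,0^{\,n-4}$ with no carrying), after which the computation is routine.
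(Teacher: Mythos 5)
Your proposal is correct and follows essentially the same route as the paper: identify the Zeckendorf representation of $3F_n$, feed it into the rank-$122$ linear representation $(v,\gamma,w)$ for $\totk$, and use the minimal polynomial of $\gamma(0)$, determining the closed form either by solving for the coefficients or (equivalently) by checking sufficiently many initial values. One remark: your form $(3F_n)_F = 10001\,0^{\,n-4}$ (from $3F_n = F_{n+2}+F_{n-2}$) is the correct one; the paper's printed ``$10^n10$'' is evidently a misprint for this same string.
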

	\begin{proof}
		We use the fact that $(3F_n)_F = 10^n10$, and the technique above.
	\end{proof}

	As another example of the power of these techniques, let us compute
	the average number  of Knott expansions for $n$ in the interval
	$F_i \leq n < F_{i+1}$.    The idea was already explained in
	\cite[\S 9.10]{Shallit:2023}:   the canonical Zeckendorf representations
	for $n$ in the interval $[F_i, F_{i+1})$ consist of those
	$i-1$ bit numbers that start with $1$ and have no occurrence of
	$11$.   Suppose $(v', \gamma', w')$ is a linear representation for
	$\totk$ that has been modified so that $v' \gamma' (x) w' = 0$
	if $x$ has an occurrence of $11$; then
	\begin{align*}
		s_\kappa(i) := \sum_{F_i \leq n < F_{i+1}} \totk(n) &=
		\sum_{\scriptscriptstyle \substack{x \text{ contains no } 11 \\ \text{and starts with } 1 \\ |x|=i-1}} v' \gamma'(x) w' \\
		&= v' \gamma'(1) (\gamma'(0)+\gamma'(1))^{i-2} w' ,
	\end{align*}
	and hence $s_\kappa(n)$ is representable as a linear combination
	of the $i-2$'th powers of the
	zeros of the minimal polynomial for $\gamma'(0)+\gamma'(1)$.
	We can then determine the coefficients by solving a linear system.
	When we do this, we get the following result:
	\begin{theorem}
		The average number of Knott expansions for $n$ in the
		interval $[F_i, F_{i+1})$ is asymptotically
		$\Theta(\rho^i)$, where $\rho = \zeta/\varphi \doteq 1.5334624126$, and
		$\zeta \doteq 2.4811943040920156$ is the dominant zero
		of $X^3-2X^2-2X+2$.
	\end{theorem}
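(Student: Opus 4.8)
The plan is to turn the displayed formula for $s_\kappa(i)$ into an asymptotic statement by analysing the spectrum of a single matrix, and then divide by the number of integers in the interval. Starting point: the rank-$122$ linear representation $(v,\gamma,w)$ for $\totk$ produced by the {\tt dek} command, and its modification $(v',\gamma',w')$ introduced just above, obtained as the standard product of $(v,\gamma,w)$ with the Boolean automaton for $\{0,1\}^*\setminus\{0,1\}^*11\{0,1\}^*$, so that $v'\gamma'(x)w'=\totk([x]_F)$ when $x$ has no $11$ and $v'\gamma'(x)w'=0$ otherwise; this stays a linear representation and can be minimized. Since the canonical Zeckendorf strings of length $i-1$ beginning with $1$ are exactly the representations of the integers $n\in[F_i,F_{i+1})$, and $\gamma'$ annihilates any string containing $11$, summing $v'\gamma'(x)w'$ over all length-$(i-1)$ binary strings starting with $1$ gives the identity already recorded, $s_\kappa(i)=v'\gamma'(1)\bigl(\gamma'(0)+\gamma'(1)\bigr)^{i-2}w'$.

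Next I would set $M:=\gamma'(0)+\gamma'(1)$ and, exactly as in the proof of Theorem~\ref{totk}, ask a symbolic algebra system for the minimal polynomial of $M$. Its roots are precisely the eigenvalues driving the linear recurrence satisfied by $s_\kappa(i)$. I would check that the factor $X^3-2X^2-2X+2$ occurs, that its dominant root $\zeta\doteq 2.4811943$ is a \emph{simple} root, and that every other eigenvalue of $M$ has modulus strictly smaller than $\zeta$ (the other two roots of the cubic are $\approx 0.689$ and $\approx -1.170$, and the remaining factors of the minimal polynomial contribute only smaller roots such as $\pm\varphi$, $\pm 1/\varphi$, roots of unity, and $0$). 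Standard linear-recurrence theory then yields $s_\kappa(i)=c\,\zeta^i+o(\zeta^i)$ for an algebraic constant $c$; computing $s_\kappa(i)$ exactly for enough small $i$ from the matrix formula and solving the resulting linear system over the appropriate number field determines $c$ and confirms $c\neq 0$ (equivalently, one verifies that $s_\kappa(i+1)/s_\kappa(i)\to\zeta$, which given the spectral picture forces $c\neq 0$; note also $s_\kappa(i)>0$ since the canonical representation is always Knott). Since the interval $[F_i,F_{i+1})$ contains exactly $F_{i+1}-F_i=F_{i-1}=\Theta(\varphi^i)$ integers, the average equals $\dfrac{c\,\zeta^i+o(\zeta^i)}{F_{i-1}}=\Theta\bigl((\zeta/\varphi)^i\bigr)=\Theta(\rho^i)$, as claimed.

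The main obstacle is the verification step with $M$: one must be certain that $\zeta$ is \emph{strictly} dominant among all eigenvalues of $M$, so that no complex eigenvalue of equal modulus spoils the $\Theta$ bound, and that the coefficient $c$ of $\zeta^i$ is genuinely nonzero. Both reduce to a finite exact computation with the explicit matrix $M$ (its minimal polynomial, the moduli of the roots, and one linear solve), but they are the only places where the argument could in principle fail. The product construction, the summation identity, and the passage from the eigenvalues of $M$ to the asymptotics of $s_\kappa(i)$ are all routine.
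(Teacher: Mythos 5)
Your proposal is correct and follows essentially the same route as the paper: the identity $s_\kappa(i)=v'\gamma'(1)(\gamma'(0)+\gamma'(1))^{i-2}w'$, spectral analysis of $\gamma'(0)+\gamma'(1)$ isolating the strictly dominant simple root $\zeta$ of $X^3-2X^2-2X+2$, a linear solve to pin down (and confirm nonvanishing of) the leading coefficient, and division by the $F_{i-1}=\Theta(\varphi^i)$ summands. The only cosmetic difference is that the paper refines the degree-38 minimal polynomial to a characteristic polynomial for the sequence $s_\kappa(i)$ itself (via the Fleischer--Shallit technique) rather than working with the full spectrum of $M$, and your explicit attention to simplicity/strict dominance of $\zeta$ and to $c\neq 0$ makes the same computation slightly more carefully justified.
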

	
	\begin{proof}
		We use this {\tt Walnut} code to compute the linear representation:
		\begin{verbatim}
			def dek2 n "?msd_fib (~$has11(n)) & $dekking(n,x,y)":
		\end{verbatim}
		The result is a linear representation of rank $122$, with
		minimal polynomial of degree $38$.  We can use the technique of
		\cite[Theorem 6]{Fleischer&Shallit:2021}
		to refine this minimal polynomial for $s_\kappa(i)$
		to \[(X^3-2X^2-2X+2)(X+1)(X^4+3X^2 + 1)(X^2-X-1).\]
		The dominant zero here is $2.4811943040920156\cdots$ of
		$X^3-2X^2-2X+2$.  Dividing by $F_{i-1}$, the number of summands,
		gives the result for the average value.
	\end{proof}
	
	\section{Natural expansions}
	\label{four}
	
	Dekking and Van Loon also enumerated what they called ``natural'' 
	$\varphi$-representations, which are those expansions
	$n = [x.y]_\varphi$ 
	where the length of $y$ (not including trailing zeros, of course)
	is the same as the length of $y'$, where
	$(n)_\varphi = x'.y'$.
	
	We can do this in the same manner as we constructed the automaton
	{\tt saka} above: namely, we simply
	intersect the first Frougny-Sakarovitch
	automaton with another automaton that imposes the length condition.
	
	\begin{verbatim}
		reg first1match {0,1} {0,1} "[0,0]*[1,1]([0,0]|[0,1]|[1,0]|[1,1])*":
		def natural "?msd_fib Ew,x $saka(n,w,x) & $frougny(n,y,z) &
		$first1match(x,z)":
		def nat n "?msd_fib $natural(n,y,z)":
	\end{verbatim}
	which produces a linear representation of rank $123$ for $\totn(n)$, the
	number of such expansions of length $n$.
	Using this linear representation, we can easily recover 
	Theorem 4.3 of Dekking and Van Loon, as follows:
	\begin{theorem}
		We have $\totn(F_{2n+1}) = \totn(F_{2n+2}) =  F_{2n+1}$  for $n \geq 0$.
	\end{theorem}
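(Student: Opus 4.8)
The plan is to proceed exactly as in the proof of Theorem~\ref{totk}, exploiting the fact that we already have in hand a linear representation $(v,\gamma,w)$ for $\totn$ (of rank $123$, which a symbolic algebra system can minimize). Since $(F_k)_F = 10^{k-2}$, we have $\totn(F_k) = v\,\gamma(1)\,\gamma(0)^{k-2}\,w$ for all $k \geq 3$, so the sequence $\bigl(\totn(F_k)\bigr)_{k \geq 3}$ is $C$-finite in $k$, with characteristic polynomial dividing the minimal polynomial of $\gamma(0)$. First I would ask {\tt Maple} for that minimal polynomial; as in Theorem~\ref{totk}, I expect it to factor into a product of small cyclotomic-type factors (contributing roots such as $\pm\varphi$, $\pm\varphi i$, $\pm i/\varphi$, and so on) together with powers of $X$, $X-1$, and $X+1$. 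The repeated factor $X+1$ is precisely what forces a parity dependence through a $(-1)^k$ term, and explains why the even- and odd-indexed Fibonacci subsequences must be treated separately in the statement.

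Next I would split into the cases $k = 2n+1$ and $k = 2n+2$. On each residue class, $\totn(F_k)$ becomes a $C$-finite sequence in $n$ whose characteristic polynomial has as roots the squares of the eigenvalues of $\gamma(0)$ (with polynomial coefficients of bounded degree coming from the repeated roots $0$, $1$, $-1$). On the other side, $F_{2n+1}$ is itself $C$-finite in $n$, with characteristic polynomial $X^2 - 3X + 1$ (roots $\varphi^2$ and $\varphi^{-2}$). Hence each of the two claimed identities $\totn(F_{2n+1}) = F_{2n+1}$ and $\totn(F_{2n+2}) = F_{2n+1}$ reduces to showing that two $C$-finite sequences over a common linear recurrence of known order $D$ agree, which in turn follows by checking agreement on the first $D$ values; this is a finite computation, since each $\totn(F_k)$ is obtained quickly from the linear representation and the Fibonacci numbers are immediate.

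Equivalently --- and perhaps more in the spirit of the paper --- one can form a linear representation for the sequence $k \mapsto \totn(F_k)$ directly (via the matrix $\gamma(0)$), subtract a candidate linear representation for $F_{2n+1}$ restricted to the appropriate residue class, and minimize the difference; since deciding whether a linear representation computes the identically-zero function \emph{is} algorithmic (unlike deciding non-negativity), this settles the claim. The one point requiring care --- the main bookkeeping obstacle --- is the size of the minimal polynomial of $\gamma(0)$ and, correspondingly, how many initial terms of $\totn(F_k)$ one must compute to pin down all coefficients once the repeated roots at $0$, $1$, and $-1$ are accounted for; but there is no conceptual difficulty beyond that, and, as usual here, no induction is needed.
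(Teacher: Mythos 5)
Your proposal is correct and follows essentially the same route the paper intends: use the linear representation for $\totn$, note that $(F_k)_F = 10^{k-2}$ so $\totn(F_k) = v\,\gamma(1)\,\gamma(0)^{k-2}\,w$ is governed by the minimal polynomial of $\gamma(0)$, split by parity to absorb the $(-1)^k$ contributions, and pin the identity down by a finite linear-algebra/initial-terms computation, exactly as in the proof of Theorem~\ref{totk}. Your closing remark that one can instead subtract candidate linear representations and apply the (algorithmic) zero-testing of linear representations is a fine equivalent finite check, not a substantively different argument.
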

	
	Using the same technique, we can also easily prove an additional result:
	\begin{theorem}
		We have $\totn(L_{2n+1}) = 1$ and 
		$\totn(L_{2n}) = 2n$ for $n \geq 1$.
	\end{theorem}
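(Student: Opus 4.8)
The plan is to reuse, essentially verbatim, the method from the proof of Theorem~\ref{totk}. We have in hand a linear representation $(v,\gamma,w)$ of rank $123$ for $\totn$. Since $\totn$ is given only by a counting linear representation and is in general unbounded, we cannot phrase the assertion as a first-order sentence for {\tt Walnut} to check directly; instead we exploit the structure of the powers of $\gamma(0)$. Recall that $(L_k)_F = 1010^{k-3}$ for $k\geq 3$, while $(L_2)_F = 100$. Hence, for $k\geq 3$,
\[
\totn(L_k) \;=\; v\,\gamma(1)\gamma(0)\gamma(1)\,\gamma(0)^{k-3}\,w,
\]
so that $\totn(L_{2n+1})$ (for $n\geq 1$) equals $v\,\gamma(1)\gamma(0)\gamma(1)\,\gamma(0)^{2n-2}\,w$ and $\totn(L_{2n})$ (for $n\geq 2$) equals $v\,\gamma(1)\gamma(0)\gamma(1)\,\gamma(0)^{2n-3}\,w$, with the single value $\totn(L_2)=\totn(3)$ treated on its own.

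First I would have Maple compute the minimal polynomial $m(X)$ of $\gamma(0)$. Then $\totn(L_{j+3})$, as a function of $j$, is a linear combination of terms $j^a\lambda^j$ with $\lambda$ ranging over the zeros of $m$ and $a$ less than the multiplicity of $\lambda$. Restricting $j$ to even values, each such term becomes a function of $n$ of the form (polynomial in $n$) times $(\lambda^2)^n$; restricting to odd $j$ it becomes that times an extra factor $\lambda$. So $\totn(L_{2n+1})$ and $\totn(L_{2n})$ are each a linear combination of an explicit, finite family of sequences in $n$. By analogy with the $\totk$ computation, in which the factor $(X-1)^3$ occurred, I expect $1$ to be a zero of $m$ of multiplicity at least $3$; then the constant sequence and the sequence $n\mapsto 2n$ both lie in the relevant span, so the proposed closed forms are admissible. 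This will be confirmed once $m$ is in hand.

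Next I would evaluate $\totn(L_k)$ for enough small $k$ directly from the linear representation (a handful of matrix products each), set up and solve the linear system for the coefficients in each parity class, and observe that for the odd index every coefficient vanishes except the one on the constant sequence, giving $\totn(L_{2n+1})=1$, while for the even index the combination collapses to $n\mapsto 2n$. Since a sequence satisfying the linear recurrence attached to $m$ is pinned down by a complete set of initial values, agreement on finitely many consecutive terms forces equality for all $n$ in the stated range. Finally, the value $\totn(L_2)=\totn(3)=2$, which is the case $n=1$ of the even family and is not covered by the displayed formula, is checked directly.

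The main obstacle is bookkeeping rather than anything conceptual: one must read the multiplicities off $m(X)$ correctly so as to include exactly the right polynomial-coefficient basis sequences, and must track carefully the ranges of $k$ for which $(L_k)_F=1010^{k-3}$ holds, so that the exceptional value $L_2$ and the starting points of the two recurrences are not mishandled. A secondary point worth checking is that $1$ and $n\mapsto 2n$ genuinely lie in the span---that is, that the linear system for the coefficients is consistent---rather than merely agreeing with the data at finitely many points; this is automatic once $1$ is seen to be a zero of $m$ of multiplicity at least $3$.
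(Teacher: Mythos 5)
Your proposal is correct and follows essentially the same route as the paper, which proves this result "using the same technique" as Theorem~\ref{totk}: express $\totn(L_k)=v\,\gamma(1)\gamma(0)\gamma(1)\,\gamma(0)^{k-3}\,w$ via the rank-$123$ linear representation and $(L_k)_F=1010^{k-3}$, note that the subsequence is governed by the zeros (with multiplicities) of the minimal polynomial of $\gamma(0)$, and pin down the closed forms in each parity class by solving a linear system against finitely many computed values. Your care about the exceptional case $L_2$ and about verifying that the constant and linear sequences lie in the admissible span is exactly the right bookkeeping.
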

	
	\section{Dekking--Van Loon ``canonical'' expansions}
	\label{fiveb}
	
	Dekking and Van Loon \cite{Dekking&van.Loon:2021} introduced a different
	kind of base-$\varphi$ representation, which they called ``canonical''.
	However, to avoid confusion, in this paper, we call them DVL-expansions.
	
	In a DVL expansion, we allow $11$, but only as the digits $d_1 d_0$,
	and then only if $d_{-1} = 0$.    We can create an automaton
	{\tt dvl} that accepts DVL expansions by adding a restriction
	{\tt dvlcond} to {\tt frougny}, as depicted in Figure~\ref{dvlcond}.
	
	The input is $x$ and $y$ in parallel, corresponding to the
	$\varphi$-representation $x.y^R$, and accepts if it obeys the DVL condition.
	
	We can then accept DVL expansions, as follows:
	\begin{verbatim}
		def dvl "?msd_fib $frougny(n,x,y) & $dvlcond(x,y)":
	\end{verbatim}
	The resulting automaton has $48$ states and accepts $n,x,y$ iff
	the DVL expansion of $n$ is $[x.y^R]$.
	
	\begin{figure}[htb]
		\begin{center}
			\includegraphics[width=6in]{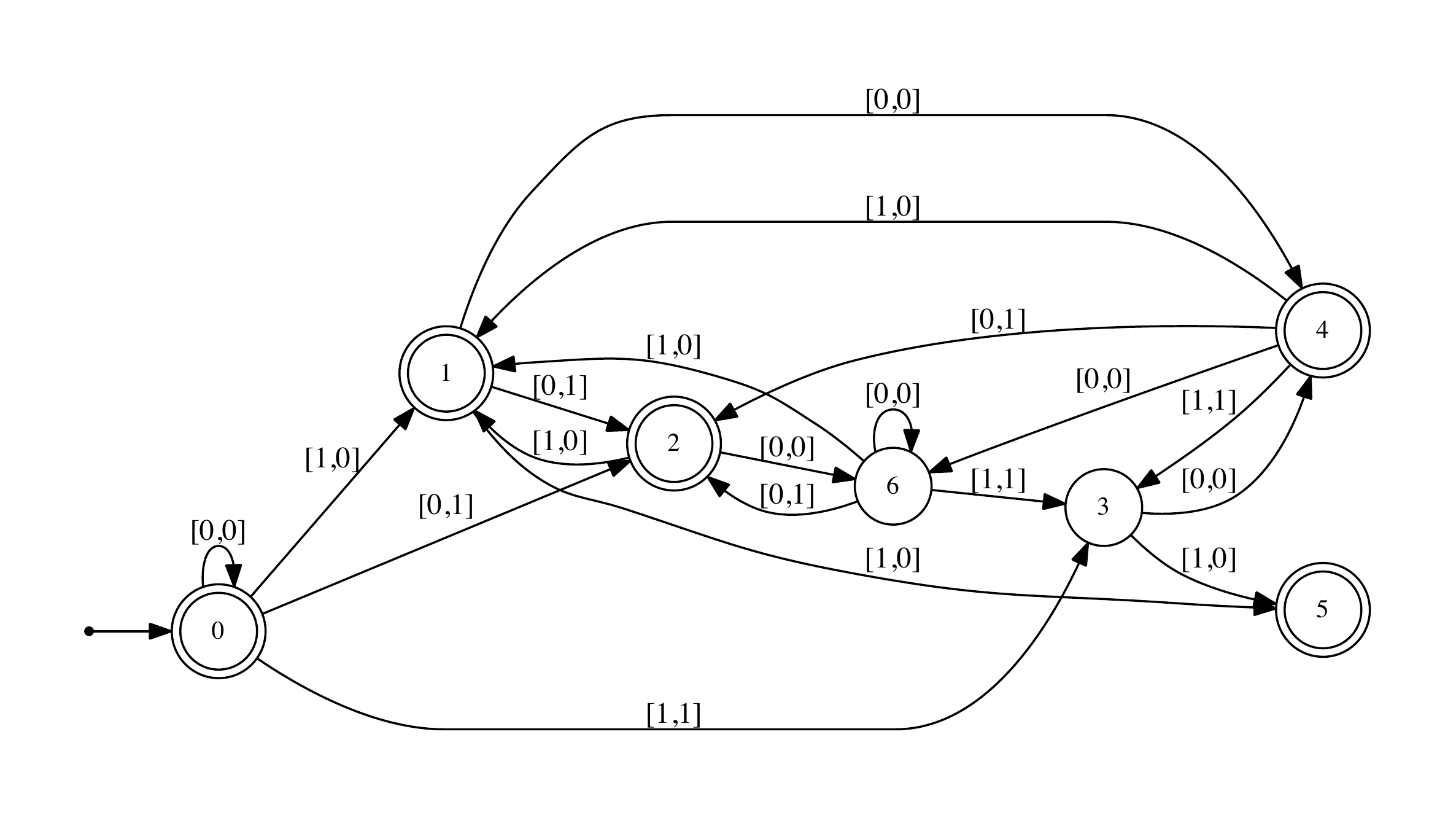}
		\end{center}
		\caption{Automaton checking the DVL condition.}
		\label{dvlcond}
	\end{figure}

	We can then use this automaton to ``automatically''
	obtain many of the results in
	\cite{Dekking&van.Loon:2021};  for example, their Proposition~3.3:
	\begin{proposition}
		The canonical $\varphi$-representation of an integer $n$ differs from the
		DVL expansion of $n$ if and only if there exists $m\geq 1$
		such that $n = \lfloor (\varphi+2)m \rfloor$.
	\end{proposition}
	\begin{proof}
		We use the {\tt Walnut} code
		\begin{verbatim}
			reg equal {0,1} {0,1} "([0,0]|[1,1])*":
			def differ "?msd_fib Ex,y,w,z $saka(n,x,y) & $dvalcanon(n,w,z) &
			((~$equal(x,w))|(~$equal(y,z)))":
			eval prop33 "?msd_fib An $differ(n) <=> (Em,t m>=1 &  $phin(m,t) &
			n=t+2*m)":
		\end{verbatim}
		and {\tt Walnut} returns {\tt TRUE}.
	\end{proof}
	
	Theorem 6.2 of Dekking and Van Loon \cite{Dekking&van.Loon:2021}
	characterized the
	lengths of vertical runs in DVL expansions; they used a case-based
	proof and 3 pages.   We can obtain their results directly using
	exactly the same ideas as for canonical expansions above in
	Theorem~\ref{vertthm}.  For example, for $i \geq 1$, we use the code
	\begin{verbatim}
		def matchdvl "?msd_fib Ex,y $dvl(n,x,y) & $isfib(t) & $match1(x,t)":
		def verticaldvl "?msd_fib (i>0) & En (~$matchdvl(n-1,t)) &
		(~$matchdvl(n+i,t)) & Aj (j<i) => $matchdvl(n+j,t)":
	\end{verbatim}
	which gives an $11$-state automaton from which we can easily read off their
	results.
	
	\section{Algebraic integers}
	\label{ten}
	
	As is well-known \cite[\S 2.4]{Stewart&Tall:1979},
	the algebraic integers of $\Que(\sqrt{5})$ are
	given by $\Zee[\varphi]$.
	The non-negative real members of $\Zee[\varphi]$ are precisely
	those with finite $\varphi$-representations \cite{Rousseau:1995}.  
	
	Given a real number of the form $z = m\varphi + n \geq 0 $ with $m, n \in \Zee$,
	we would like to compute its $\varphi$-representation with an automaton.
	Since $m, n$ could each possibly be negative, it makes sense to represent
	them in negaFibonacci representation.   We can then compute the
	appropriate automaton, using Eqsuations~\eqref{berg1} and \eqref{berg2}
	by a small variation of what we did in Section~\ref{two}.  
	We give the {\tt Walnut code}:
	\begin{verbatim}
		def intpartleft "?msd_fib Er,s,t,y,b,c $shiftr(x,r) & $shiftr(r,s) &
		$shiftr(s,t) & $fibnorm(t,y) & $lstbit1(x,b) & $lstbit3(x,c) &
		z=y+b+c":
		
		def phipartleft "?msd_fib Er,s,y,b $shiftr(x,r) & $shiftr(r,s) &
		$fibnorm(s,y) & $lstbit2(x,b) & z=y+b":
		
		def intpartright "?msd_neg_fib Er $shiftl(x,r) & $negfibnorm(r,z)":
		
		def phipartright "?msd_neg_fib $negfibnorm(x,z)":
		
		def genfrou "?msd_neg_fib Ez,t,u,q,r,s $intpartleft(x,?msd_fib z) &
		$fibnegfib((?msd_fib z),t) & $intpartright(y,u) & n=t+u &
		$phipartleft(x,?msd_fib q) & $phipartright(y,r) & 
		$fibnegfib((?msd_fib q),s) & m=r+s":
		# 536 states
		
		def canfrou "?msd_neg_fib $genfrou(m,n,x,y) & $no11xy(x,y)":
		# 259 states
	\end{verbatim}
	Here {\tt genfrou} does not assume that the expansion given
	by $[x.y^R]$ is canonical, but {\tt canfrou} does.
	
	As an application, let us find the algebraic numbers 
	$m \varphi + n$ such that the left and right parts of their
	base-$\varphi$ representation have the same length.
	
	\begin{theorem}
		There is a $96$-state automaton that accepts those pairs
		$m,n$, in negaFibonacci representation,
		such that $(m \varphi + n)_\varphi$ has left and right parts
		of the same length.
	\end{theorem}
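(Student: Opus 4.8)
The plan is to reuse the automaton {\tt canfrou} constructed just above, which accepts a quadruple $(m,n,x,y)$ --- with $m,n$ in negaFibonacci representation and $x,y$ arbitrary binary strings --- exactly when the canonical expansion satisfies $(m\varphi+n)_\varphi = x.y^R$. Because the right part is stored ``folded'' (as $y$, the reversal of $y^R$), the trailing zeros of $y^R$ become leading zeros of $y$; likewise $x$ may carry leading zeros for padding. Hence the length of the left part of $(m\varphi+n)_\varphi$ equals the number of symbols of $x$ strictly following its run of leading zeros, and the length of the right part equals the analogous quantity for $y$. These two lengths agree precisely when the first $1$ of $x$ and the first $1$ of $y$ occur in the same position when the two strings are read in parallel --- which is exactly the condition already checked by the regular expression {\tt first1match} introduced in Section~\ref{four}.

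So first I would form the conjunction asserting: there exist binary strings $x,y$ with $(m\varphi+n)_\varphi = x.y^R$ and with the first $1$ of $x$ aligned to the first $1$ of $y$. In {\tt Walnut}:
\begin{verbatim}
def samelength "?msd_neg_fib Ex,y $canfrou(m,n,x,y) & $first1match(x,y)":
\end{verbatim}
{\tt Walnut} then builds the automaton on the remaining free variables $m,n$, projects onto those coordinates, determinizes, and minimizes; the claim is that the result has $96$ states.

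The only genuine subtlety --- and the step most likely to trip one up --- is getting the leading/trailing-zero bookkeeping exactly right, since it is a matter of convention rather than mathematics: one must confirm that ``same length'' in the sense of the theorem (after discarding leading zeros of the left part and trailing zeros of the right part) really does correspond to ``{\tt first1match} accepts $(x,y)$''. A harmless boundary case is $m\varphi+n=0$, where both parts are empty and hence of equal length $0$; if one wishes the automaton to accept that pair as well, one simply disjoins the clause \texttt{\$haszero1(x) \& \$haszero1(y)}, which perturbs the state count only trivially. Apart from this verification, the proof is purely a matter of running the decision procedure and reading off the size of the minimized automaton.
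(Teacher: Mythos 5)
Your proposal is correct and matches the paper's proof essentially verbatim: the paper defines \texttt{samelen} by intersecting \texttt{canfrou} with \texttt{first1match} and existentially quantifying over $x,y$, exactly as you do. Your extra remarks on the leading-zero convention and the $m\varphi+n=0$ boundary case are reasonable sanity checks but do not change the argument.
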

	
	\begin{proof}
		We use the following {\tt Walnut} code:
		\begin{verbatim}
			def samelen "?msd_neg_fib Ex,y $canfrou(m,n,x,y) & $first1match(x,y)":
		\end{verbatim}
	\end{proof}

	\section{A final word}
	\label{nine}
	
	{\tt Walnut} is available for free download at \\
	\centerline{\url{https://cs.uwaterloo.ca/~shallit/walnut.html}.} 
	For the definitions of {\tt Walnut} automata that we did not
	give explicitly, see  \\
	\centerline{\url{https://cs.uwaterloo.ca/~shallit/papers.html}.} 
	
	Most of the other results of \cite{Dekking&van.Loon:2023} can be
	proved using the analogous techniques.
	
	In principle, everything we have said in this paper can be applied
	to $\beta$-expansions, where $\beta$ is a quadratic Pisot number.
	The choice $\beta = \varphi$ was particularly easy in the current
	version of {\tt Walnut}, because it already
	has Fibonacci and negaFibonacci numbers
	implemented, and adders for integers represented in these forms.
	
	
	
	\subsection*{Acknowledgments} 
	I thank Anna Lubiw, George Bergman, Jacques Sakarovitch,
	Michel Dekking, and Daniel Berend for helpful comments and insights.
	
	I also thank the two anonymous referees, who read the paper carefully
	and provided many useful suggestions.
	
	Finally, I thank both George Bergman (happy 80th birthday, George!) and 
	Christiane Frougny (Joyeux anniversaire, Christiane !) for their inspiring work over many decades.
	

	\EditInfo{December 1, 2023}{April 25, 2024}{Rigo Michel, Emilie Charlier and Julien Leroy}

\end{document}